\newtheorem{theorem}{Theorem}[section]
\newtheorem{lemma}[theorem]{Lemma}
\newtheorem{proposition}[theorem]{Proposition}
\newtheorem{corollary}[theorem]{Corollary}
\newtheorem{conjecture}[theorem]{Conjecture}
\theoremstyle{definition}
\newtheorem{definition}[theorem]{Definition}
\newtheorem{example}[theorem]{Example}
\newlength{\Oldarrayrulewidth}
\newcommand{\C}{\mathbb{C}}
\newcommand{\F}{\mathbb{F}}
\newcommand{\N}{\mathbb{N}}
\newcommand{\bz}{\textup{\textbf{0}}}
\newcommand{\doublewidetilde}[1]{{%
  \mathpalette\double@widetilde{#1}%
}}
\newcommand{\double@widetilde}[2]{%
  \sbox\z@{$\m@th#1\widetilde{#2}$}%
  \ht\z@=.9\ht\z@
  \widetilde{\box\z@}%
}
\def\m@th{\mathsurround=0pt}
\def\sm#1{\null\,\vcenter{\baselineskip9pt\lineskip.23ex\m@th
    \ialign{\hfil$\scriptstyle##$\hfil&&\ \hfil$\scriptstyle##$\hfil\crcr
    \mathstrut\crcr\noalign{\kern-\baselineskip}
    #1\crcr\mathstrut\crcr\noalign{\kern-\baselineskip}}}\,}
\def\smnp#1{\null\,\vcenter{\baselineskip9pt\lineskip.23ex\m@th
    \ialign{\hfil$\scriptstyle##$\hfil&&\ \ \hfil$\scriptstyle##$\hfil\crcr
    \mathstrut\crcr\noalign{\kern-\baselineskip}
    #1\crcr\mathstrut\crcr\noalign{\kern-\baselineskip}}}\,}
\begin{document}

\title{Building matrices with prescribed size and number of invertible submatrices}
\author[1]{Edward S.\ T.\ Fan\thanks{edward\_fan@math.brown.edu}}
\author[2]{Tony W.\ H.\ Wong\thanks{wong@kutztown.edu}}
\affil[1]{Department of Mathematics, Brown University}
\affil[2]{Department of Mathematics, Kutztown University of Pennsylvania}
\date{\today}

\maketitle

\begin{abstract}
Given an ordered triple of positive integers $(n,r,b)$, where $1\leq b\leq\binom{n}{r}$, does there exist a matrix of size $r\times n$ with exactly $b$ invertible submatrices of size $r\times r$? Such a matrix is called an $(n,r,b)$-matrix. This question is a stronger version of an open problem in matroid theory raised by Dominic Welsh. In this paper, we prove that an $(n,r,b)$-matrix exists when the corank satisfies $n-r\leq3$, unless $(n,r,b)=(6,3,11)$. Furthermore, we show that an $(n,r,b)$-matrix exists when the rank $r$ is large relative to the corank $n-r$.\\
\textit{MSC:} 15A03, 05B35, 05A05\\
\textit{Keywords:} Rank, Corank, Number of bases, Welsh's problem, Linear matroids
\end{abstract}

\section{Introduction}

Throughout the paper, let $n$ and $r$ denote two positive integers such that $r\leq n$. Let $A$ be a matrix of size $r\times n$ over a field $\F$ with full row rank. Let $b$ be the number of invertible $r\times r$ submatrices of $A$. What are the possible values of $b$? From basic linear algebra and simple counting, we know that $b$ must be between $1$ and $\binom{n}{r}$ inclusively.

Here is a more interesting question: is every such $b$ attainable? In other words, if we are given an ordered triple of positive integers $(n,r,b)$ such that $1\leq b\leq\binom{n}{r}$, which implicitly implies $r\leq n$, can we always build a matrix $A$ of size $r\times n$ over a field, such that the number of invertible $r\times r$ submatrices is exactly $b$? We denote such a matrix as an \textit{$(n,r,b)$-matrix}.

It turns out that the answer to the aforementioned question is known to be negative, thanks to the work by Anna de Mier on matroid theory. Here is a brief introduction on matroids.

\begin{definition} A \textit{matroid} $\mathcal{M}=(X,\mathcal{I})$ is a combinatorial structure defined on a finite ground set $X$ of $n$ elements, together with a family $\mathcal{I}$ of subsets of $X$ called \textit{independent sets}, satisfying the following three properties.
\begin{enumerate}
\item $\emptyset\in\mathcal{I}$.
\item (\textit{Hereditary property}) If $I\in\mathcal{I}$ and $J\subseteq I$, then $J\in\mathcal{I}$.
\item (\textit{Augmentation property}) If $I,J\in\mathcal{I}$ and $|J|<|I|$, then there exists $x\in I\backslash J$ such that $J\cup\{x\}\in\mathcal{I}$.
\end{enumerate}
\end{definition}

An independent set $I\in\mathcal{I}$ is called a \textit{basis} of the matroid $\mathcal{M}$ if $I$ is maximal. By the augmentation property, all bases of $\mathcal{M}$ share the same cardinality. This cardinality is defined as the \textit{rank} of $\mathcal{M}$, and the difference between the size of the set $X$ and the rank of $\mathcal{M}$ is defined as the \textit{corank} of $\mathcal{M}$.

It is not difficult to see that matrices give rise to a special class of matroids, the \textit{linear matroids} or the \textit{representable matroids}. Given a matrix $A$ over a field $\F$, let $X$ be the set of columns of $A$, and let $\mathcal{I}$ be a family of subsets of $X$ such that $I\in\mathcal{I}$ if and only if the column vectors in $I$ are linearly independent over $\F$. Then the rank of the linear matroid $\mathcal{M}=(X,\mathcal{I})$ coincides with the rank of the matrix $A$, and the corank of the matriod is the difference between the number of columns and the rank of $A$. Since the elements of a linear matroid are the columns of a matrix, a linear matroid is also called a \textit{column matroid}.

Given an ordered triple of positive integers $(n,r,b)$ such that $1\leq b\leq\binom{n}{r}$, our question about the existence of an $(n,r,b)$-matrix is in fact a stronger version of an open problem posed by Welsh \cite{welsh}: given such a triple $(n,r,b)$, does there always exist a matroid $\mathcal{M}=(X,\mathcal{I})$ such that $|X|=n$, the rank of $\mathcal{M}$ is $r$, and the number of bases in $\mathcal{I}$ is exactly $b$? Such a matroid is called an \textit{$(n,r,b)$-matroid}.

According to Mayhew and Royle \cite{mayhewroyle}, Anna de Mier answered Welsh's question negatively by proving that an $(n,r,b)$-matroid does not exist when $(n,r,b)=(6,3,11)$. This implies that a $(6,3,11)$-matrix does not exist. However, Mayhew and Royle conjectured that this is the lone counterexample.

\begin{conjecture}[\cite{mayhewroyle}]\label{conjmatroid}
Let $(n,r,b)\neq(6,3,11)$ be an ordered triple of positive integers such that $1\leq b\leq\binom{n}{r}$. Then an $(n,r,b)$-matroid exists.
\end{conjecture}

Here, we propose a stronger version of Conjecture~\ref{conjmatroid} as follows. Please note that this conjecture is merely being offered, but the complete proof is currently still out of reach.

\begin{conjecture}\label{conj}
Let $(n,r,b)\neq(6,3,11)$ be an ordered triple of positive integers such that $1\leq b\leq\binom{n}{r}$. Then an $(n,r,b)$-matrix exists.
\end{conjecture}

In this paper, we show that an $(n,r,b)$-matrix exists for a large family of triples $(n,r,b)$ by using an inductive argument, with base cases being taken care of by computer programming. Throughout this paper, $A$ denotes an $r\times n$ matrix over $\C$ with rank $r$ and corank $k=n-r$, and the number of $r\times r$ invertible submatrices of $A$ is $b$. Note that $b$ is an invariant if we perform elementary row operations on $A$ or permute the columns of $A$, so we can always assume that $A=(I_r|M)$, where $I_r$ is the identity matrix of order $r$ and $M$ is an $r\times k$ matrix. Furthermore, we only need to consider the existence of an $(n,r,b)$-matrix for $k\leq r$, due to the following simple observation.

\begin{proposition}\label{k<=r}
An $(n,r,b)$-matrix exists if and only if an $(n,k,b)$-matrix exists.
\end{proposition}

\begin{proof}
If an $(n,r,b)$-matrix exists, then an $(n,r,b)$-matroid exists. By duality, an $(n,k,b)$-matriod also exists. Since duality preserves representability, an $(n,k,b)$-matrix exists. The converse can be proved in a similar manner.
\end{proof}

In the rest of the paper, we are going to assume that $r$ is a positive integer such that $0\leq k\leq r$, or equivalently, $r\leq n\leq 2r$. We will also assume that $b$ is a positive integer such that $1\leq b\leq\binom{n}{r}=\binom{r+k}{k}$. To construct an $(n,r,b)$-matrix $A=(I_r|M)$, it is more convenient to consider the number of invertible square submatrices of $M$. A \emph{submatrix} of $M$ is a matrix formed by selecting a subset of the rows and a subset of the columns from $M$ and arranging those entries from both the selected rows and columns in the same relative positions. A \emph{square submatrix} of $M$ is formed if the subset of the rows and the subset of the columns selected share the same cardinality. The ``empty submatrix", formed by selecting an empty set of the rows and an empty set of the columns, is also considered as a square submatrix of $M$, and it is defined to be invertible.

It is worth noting that the number of square submatrices of $M$ is
$$\sum_{i=0}^k\binom{r}{i}\binom{k}{i}=\sum_{i=0}^k\binom{r}{i}\binom{k}{k-i}=\binom{r+k}{k},$$
which is precisely the number of $r\times r$ submatrices of $A=(I_r|M)$. In the following proposition, we are going to prove that there is a bijection between the set of invertible $r\times r$ submatrices of $A=(I_r|M)$ and the set of invertible square submatrices of $M$.

\begin{proposition}\label{AtoM}
Let $A=(I_r|M)$ be an $(n,r,b)$-matrix. Then the number of invertible square submatrices of $M$ is $b$.
\end{proposition}

\begin{proof}
Let $\mathcal{S}$ be the set of all invertible $r\times r$ submatrices of $A=(I_r|M)$, and let $\mathcal{T}$ be the set of all invertible square submatrices of $M$. Let $S\in\mathcal{S}$ be constructed by selecting columns $i_1,i_2,\dotsc,i_r$ from $A$, where $i_1,\dotsc,i_j\leq r$ and $i_{j+1},\dotsc,i_r>r$. Let $T$ be the square submatrix of $M$ with rows in $\{1,2,\dotsc,r\}\backslash\{i_1,i_2,\dotsc,i_j\}$ and columns $i_{j+1}-r,i_{j+2}-r,\dotsc,i_r-r$. It is easy to see that $|\det(S)|=|\det(T)|$, so $T\in\mathcal{T}$. It is also easy to see that our map from $\mathcal{S}$ to $\mathcal{T}$ by sending each $S$ to the corresponding $T$ has an inverse, which establishes our proposition.
\end{proof}

Let an $r\times k$ matrix $M$ with exactly $b$ invertible square submatrices be called an \textit{$(r,k,b)^*$-matrix}. In view of Propositions~\ref{k<=r} and \ref{AtoM}, Conjecture~\ref{conj} is equivalent to the following conjecture.

\begin{conjecture}\label{conjM}
Let $(r,k,b)\neq(3,3,11)$ be an ordered triple of nonnegative integers such that $\max\{1,k\}\leq r$ and $1\leq b\leq\binom{r+k}{k}$. Then an $(r,k,b)^*$-matrix exists.
\end{conjecture}

Next, we observe that the na\"{i}ve ``extension by zero" construction exhibits a useful relationship between the existence of various $(r,k,b)^*$-matrices.

\begin{lemma}\label{rkbigger}
If there exists an $(r_0,k_0,b)^*$-matrix, then for all integers $r$ and $k$ such that $k_0\leq k,r_0\leq r$, an $(r,k,b)^*$-matrix exists.
\end{lemma}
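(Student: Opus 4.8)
The plan is to use the ``extension by zero'' construction that the lemma's statement hints at. Given an $(r_0,k_0,b)$-matrix $M_0$, I would, for any $r\geq r_0$ and $k\geq k_0$, form the $r\times k$ block matrix
\[
M=\begin{pmatrix} M_0 & 0 \\ 0 & 0 \end{pmatrix},
\]
where the last $r-r_0$ rows and last $k-k_0$ columns are identically zero, and then show that $M$ is an $(r,k,b)$-matrix. (Equivalently, one can prove the single-step claim that an $(r_0,k_0,b)$-matrix yields an $(r_0+1,k_0,b)$-matrix, invoke the transpose symmetry — by Proposition~\ref{AtoIMtranspose} applied to $M$ and $M^\top$, an $(r,k,b)$-matrix exists iff a $(k,r,b)$-matrix does — to also get an $(r_0,k_0+1,b)$-matrix, and then induct on $r-r_0$ and $k-k_0$; both routes are the same idea.)

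The key step is the count of invertible square submatrices of $M$. A square submatrix is specified by a row set $R$ and a column set $C$ with $|R|=|C|$. If $R$ contains any index in $\{r_0+1,\dotsc,r\}$, then the chosen submatrix has an all-zero row and is singular; likewise, if $C$ contains any index in $\{k_0+1,\dotsc,k\}$, it has an all-zero column and is singular. Hence every invertible square submatrix of $M$ is supported on rows $\{1,\dotsc,r_0\}$ and columns $\{1,\dotsc,k_0\}$, i.e.\ it is literally a square submatrix of $M_0$, and it is invertible in $M$ precisely when it is invertible in $M_0$. This gives a bijection between the invertible square submatrices of $M$ and those of $M_0$ — consistently including the empty submatrix on both sides, per the stated convention — so $M$ has exactly $b$ of them, as required.

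I do not expect a genuine obstacle here: the argument is essentially a one-line observation that adjoining zero rows or columns can never create a new nonsingular submatrix and never destroys an existing one. The only points that merit a moment's care are confirming that the zero-line argument really covers \emph{every} square submatrix meeting a new row or column (it does, independently of which other rows/columns are chosen), and keeping the empty-submatrix convention aligned on both sides of the bijection so that the value $b$ is preserved exactly rather than up to an additive constant.
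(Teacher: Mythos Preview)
Your proposal is correct and follows exactly the paper's approach: the paper's proof is the one-line observation that embedding $M_0$ as a submatrix of an $r\times k$ matrix $M$ with all extra entries zero yields an $(r,k,b)$-matrix. You have simply spelled out the bijection on invertible square submatrices that the paper leaves implicit.
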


\begin{proof}
Let $M_0$ be an $(r_0,k_0,b)^*$-matrix. Let $M$ be an $r\times k$ matrix such that $M_0$ is a submatrix and all the extra entries are $0$'s. Then $M$ is an $(r,k,b)^*$-matrix.
\end{proof}

A similar statement is true for matroids as well. If there exists an $(n_0,r_0,b)$-matroid, then for all integers $r\geq r_0$ and $n\geq n_0+(r-r_0)$, an $(n,r,b)$-matroid exists by adding $r-r_0$ coloops and $(n-n_0)-(r-r_0)$ loops.

There are three main results in this paper. The first one is Theorem~\ref{blarge}, which states that Conjecture~$\ref{conjM}$ holds when $r$ and $b$ are large compared to $k$. With this tool, we are able to prove our second main result, namely Theorem~\ref{thmb<=r+3Cr}, which states that Conjecture~$\ref{conjM}$ holds under the additional condition that $b\leq\binom{r+3}{3}$. Finally, we strengthen these results into the following theorem, which is our third main result.

\begin{theorem}\label{rlarge}
For each fixed integer $k\geq3$, there exists $R\in\N$ such that for all integers $r\geq R$ and $1\leq b\leq\binom{r+k}{k}$, an $(r,k,b)^*$-matrix, and hence an $(r+k,r,b)$-matrix, exists.
\end{theorem}

To our knowledge, this paper gives the best answer to Welsh's problem thus far, and it provides a plausible reason for the existence of a counterexample for small values of $r$. Our techniques are mainly induction with algebraic constructions and analytic estimations, with the aid of computer programming only to verify the base cases.

\section{Existence of $(n,r,b)$-matrices with corank at most $2$}\label{corank2}

In this section, we will prove that $(r,k,b)^*$-matrices always exist if $k\leq2$. The cases $k=0$ and $k=1$ are very straightforward, and the case $k=2$ is the only nontrivial one. The following lemma in number theory will help us show the existence of $(r,2,b)^*$-matrices.

\begin{lemma}\label{sumofsq}
Let $s\geq5$ be a positive integer, and let $c\leq\frac{s^2-5s}{4}$ be a nonnegative integer. Then there exist nonnegative integers $a_1,a_2,\dotsc,a_s$ such that $a_1+a_2+\dotsb+a_s=s$ and $a_1^2+a_2^2+\dotsb+a_s^2=s+2c$.
\end{lemma}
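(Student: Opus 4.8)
The plan is to treat $a_1 + \dots + a_s = s$ and $\sum a_i^2 = s + 2c$ by thinking of the $a_i$ as a rearrangement of the all-ones vector perturbed so that the "excess" sum of squares equals $2c$. First I would observe that if we write $a_i = 1 + d_i$ with $\sum d_i = 0$, then $\sum a_i^2 = s + 2\sum d_i + \sum d_i^2 = s + \sum d_i^2$, so the target becomes: find integers $d_i \geq -1$ summing to zero with $\sum d_i^2 = 2c$. A clean way to generate such configurations is to take "blocks": replacing a batch of $t$ ones (contributing $t$ to both sums) by a single entry $t$ and $t-1$ zeros contributes $t^2$ to the sum of squares while keeping the linear sum fixed, i.e. it adds $t^2 - t = t(t-1) = 2\binom{t}{2}$ to the sum of squares. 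So the real task is to show every nonnegative integer $c \leq \frac{s^2-5s}{4}$ is a sum of triangular numbers $\binom{t_1}{2} + \binom{t_2}{2} + \dots$ subject to the budget constraint that the blocks fit inside $s$ coordinates, i.e. $\sum t_j \leq s$ (the leftover coordinates stay equal to $1$).

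The key steps, in order, would be: (1) reduce to the triangular-number representation as above; (2) use a greedy argument — given $c$, pick the largest $t$ with $\binom{t}{2} \leq c$, use one block of size $t$, and recurse on $c - \binom{t}{2}$ with budget $s - t$; (3) control the total budget consumed. For step (3), the worst case is when $c$ is as large as allowed: if $c = \frac{s^2-5s}{4}$, I would want to exhibit an explicit near-extremal configuration, e.g. taking a small number of large blocks. Note $\binom{t}{2} \approx \frac{t^2}{2}$, so a single block of size roughly $t \approx \frac{s}{\sqrt 2}$ already achieves sum of squares about $\frac{s^2}{4}$, which suggests the bound $\frac{s^2-5s}{4}$ is chosen precisely so that one or two well-chosen blocks suffice while staying within the $s$ available coordinates. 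I would likely split into cases based on the size of $c$ relative to $\binom{\lceil s/\sqrt2\rceil}{2}$ or similar, handling large $c$ by an explicit two-block construction (a block of size $t$ and a block of size $s - t$ or smaller, optimizing $t^2 + u^2$ subject to $t + u \leq s$, which is maximized at the extremes of the interval) and small $c$ by the greedy recursion, where the budget is obviously not tight.

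The main obstacle I anticipate is the bookkeeping at the boundary: verifying that for every $c$ in the allowed range one can simultaneously hit $\sum d_i^2 = 2c$ \emph{exactly} and keep $\sum t_j \leq s$, including the parity/divisibility niceties (note the lemma only asks for $s + 2c$, an even offset, so parity is not an issue — this is why the statement is phrased with $2c$). The hypothesis $s \geq 5$ is presumably needed so that small blocks $\{t=1, t=2, t=3\}$, with triangular numbers $0, 1, 3$, already let the greedy step make progress of at least... well, $\binom{2}{2} = 1$, so consecutive integers are reachable, which is what makes the greedy exact. I would double-check the extremal case $c = \frac{s^2-5s}{4}$ (valid only when $s \equiv 0$ or $1 \pmod 4$, otherwise the floor is implicit) by hand: try $a = (s-2, 2, 0, \dots, 0)$ giving $\sum a_i^2 = (s-2)^2 + 4 = s^2 - 4s + 8$, which is too big; instead something like two blocks of size $\frac{s}{2}$ each gives $\frac{s^2}{2}$, still too big, so the extremal configuration must be more balanced — this is exactly the delicate optimization I'd need to pin down, and I expect the constant $5$ in $\frac{s^2-5s}{4}$ falls out of it.
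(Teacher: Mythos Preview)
Your reduction is correct and matches the paper: the ``block'' move (replace $t$ ones by a single entry $t$ and $t-1$ zeros, adding $\binom{t}{2}$ to $c$) is precisely the paper's inductive step, and the problem indeed becomes representing $c$ as $\sum_j\binom{t_j}{2}$ subject to $\sum_j t_j\le s$.

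The gap is the greedy algorithm itself: it does not always stay within budget. For $s=12$, $c=20$, greedy takes $t_0=6$ (remainder $5$), then $t_1=3$, $t_2=2$, $t_3=2$, using $13>12$ slots --- yet $20=\binom{5}{2}+\binom{5}{2}$ with budget $10$ works fine. So ``largest $t$ first'' is the wrong mechanism, and your closing search for an explicit extremal configuration at $c=c_{\max}$ is also not how the argument closes. The paper's fix is strong induction on $s$: peel off one block of size $t$ and apply the hypothesis to $s-t$ with $c'=c-\binom{t}{2}$, which succeeds exactly when $c\in I(t)=\bigl[\binom{t}{2},\,\binom{t}{2}+\tfrac{(s-t)^2-5(s-t)}{4}\bigr]$. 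The substantive work is then showing that the intervals $I(t)$ for $1\le t\le s-5$ cover $\bigl[0,\tfrac{s^2-5s}{4}\bigr]$; this comes down to checking that consecutive intervals overlap (equivalently $t\le\tfrac{2s+1-\sqrt{16s+1}}{2}$) far enough out, which holds once $s\ge 33$. The range $5\le s\le 32$, where neither greedy nor the one-block interval argument suffices, is verified by computer --- a base-case step you do not mention and would also need.
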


\begin{proof}
For $5\leq s\leq32$, we verified the lemma with the following Mathematica code.\\
\\
\texttt{And @@ Table[\\
\indent SubsetQ[ Map[Total, IntegerPartitions[s, s]\^{}2],\\
\indent\indent Table[s + 2 c, \{c, Floor[(s\^{}2 - 5 s)/4]\}] ],\\
\indent \{s, 5, 32\}]}\\

For $s\geq33$, we proceed with strong induction on $s$. Suppose the statement is true for all integers $u$ such that $5\leq u<s$ for some $s\geq33$, i.e., for all nonnegative integers $c'\leq\frac{u^2-5u}{4}$, there exist nonnegative integers $a_1,a_2,\dotsc,a_u$ such that $a_1+a_2\dotsb+a_u=u$ and $a_1^2+a_2^2+\dotsb+a_u^2=u+2c'$.

Let $t$ and $c$ be integers such that $0<t\leq s-5$ and $0\leq c-\frac{t^2-t}{2}\leq\frac{(s-t)^2-5(s-t)}{4}$. Then $u:=s-t$ falls in the range $5\leq u<s$, and $c':=c-\frac{t^2-t}{2}\leq\frac{u^2-5u}{4}$. By the induction hypothesis, there exist nonnegative integers $a_1,a_2,\dotsc,a_{s-t}$ such that $a_1+a_2+\dotsb+a_{s-t}=s-t$ and $a_1^2+a_2^2+\dotsb+a_{s-t}^2=s -t+2\big(c-\frac{t^2-t}{2}\big)=s+2c-t^2$. If we set $a_{s-t+1}=t$ and $a_{s-t+2}=\dotsb=a_s=0$, then $a_1+\dotsb+a_s=s$ and $a_1^2+\dotsb+a_s^2=s+2c$, implying that the statement holds true for all integer $c$ satisfying $0\leq c-\frac{t^2-t}{2}\leq\frac{(s-t)^2-5(s-t)}{4}$, or equivalently, $\frac{t^2-t}{2}\leq c\leq\frac{3t^2-2st+3t+s^2-5s}{4}$.

It now suffices to show that the union of the intervals $I(t):=\Big[\frac{t^2-t}{2},\frac{3t^2-2st+3t+s^2-5s}{4}\Big]$ for $0<t\leq s-5$ covers $\Big[0,\frac{s^2-5s}{4}\Big]$ when $s\geq33$. Let $\alpha(t)=\frac{t^2-t}{2}$ and $\beta(t)=\frac{3t^2-2st+3t+s^2-5s}{4}$.\vspace{5pt}

\noindent\textit{Claim $1$.} $\frac{s^2-5s}{4}\leq\beta(t)$ if and only if $t\geq\frac{2}{3}s-1$, which is attainable for some $t$ in the range $0<t\leq s-5$ if $s\geq12$.

\begin{proof}[Proof of Claim $1$]
This inequality holds if and only if $3t^2-2st+3t\geq0$, which is equivalent to $t\geq\frac{2}{3}s-1$ since $t$ is positive. We finish by noticing that when $s\geq12$, $s-5\geq\frac{2}{3}s-1$.
\end{proof}

\noindent\textit{Claim $2$.} $\alpha(t-1)\leq\alpha(t)\leq\beta(t)$.

\begin{proof}[Proof of Claim $2$]
The first inequality holds since $\alpha(t)$ is an increasing function for $t\geq1$. The second inequality holds if and only if $(s-t)^2\geq5(s-t)$, which is always true since $0<t\leq s-5$.
\end{proof}

\noindent\textit{Claim $3$.} $\alpha(t)\leq\beta(t-1)$ if and only if $t\leq\frac{2s+1-\sqrt{16s+1}}{2}$.

\begin{proof}[Proof of Claim $3$]
This inequality holds if and only if $t^2-(2s+1)t+s^2-3s\geq0$, which occurs if and only if $t\leq\frac{2s+1-\sqrt{16s+1}}{2}$ or $t\geq\frac{2s+1+\sqrt{16s+1}}{2}$. However, $t\geq\frac{2s+1+\sqrt{16s+1}}{2}$ is rejected since $t<s$.
\end{proof}

By Claims $2$ and $3$, if $t\leq\frac{2s+1-\sqrt{16s+1}}{2}$, then $I(1)\cup\dotsb\cup I(t-1)\cup I(t)$ forms one closed interval. Hence, by Claim $1$, $\Big[0,\frac{s^2-5s}{4}\Big]\subseteq\overset{\left\lceil\frac{2}{3}s-1\right\rceil}{\underset{t=1}{\bigcup}}I(t)$ if and only if $\big\lceil\frac{2}{3}s-1\big\rceil\leq\frac{2s+1-\sqrt{16s+1}}{2}$. This inequality holds if $s$ is an integer satisfying $\frac{2}{3}s\leq\frac{2s+1-\sqrt{16s+1}}{2}$, or equivalently, $3\sqrt{16s+1}\leq2s+3$, which is true when $33s\leq s^2$, or $s\geq33$.
\end{proof}

\begin{theorem}\label{n<=r+2}
If $k\leq2$, then for all integers $b$ such that $1\leq b\leq\binom{r+k}{k}$, an $(r,k,b)^*$-matrix $M$ exists.
\end{theorem}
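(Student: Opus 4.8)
The plan is to treat $k=0$ and $k=1$ by inspection and to handle the only substantial case $k=2$ with an explicit construction whose basis count is pinned down by Lemma~\ref{sumofsq}, together with a finite computer verification for the small values of $b$. For $k=0$, the unique $r\times 0$ matrix has only the empty matrix as a square submatrix, so $b=1=\binom{r}{r}$, and there is nothing to choose. For $k=1$, an $r\times 1$ matrix $M$ has the empty matrix together with its nonzero entries as its invertible square submatrices, so $b=1+\#\{i:M_{i1}\neq 0\}$ ranges over $\{1,2,\dots,r+1\}=\{1,\dots,\binom{r+1}{r}\}$ as we vary the number of nonzero entries.

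For $k=2$, given $p$ with $0\le p\le r$ and a composition $p=a_1+\dots+a_m$ into nonnegative parts, let $M$ be the $r\times 2$ matrix with $r-p$ zero rows together with, for each $j$, exactly $a_j$ copies of the row $(1,t_j)$, where $t_1,\dots,t_m$ are distinct nonzero rationals. Every nonzero row has two nonzero entries, and two rows are linearly independent precisely when both are nonzero and lie on distinct lines through the origin; hence $M$ has $2p$ invertible $1\times 1$ submatrices and $\binom{p}{2}-\sum_j\binom{a_j}{2}$ invertible $2\times 2$ submatrices, so
\[
b=1+2p+\binom{p}{2}-\sum_j\binom{a_j}{2}=\binom{p+2}{2}-\sum_j\binom{a_j}{2}.
\]
Since $\sum_j\binom{a_j}{2}=\frac12\bigl(\sum_j a_j^2-p\bigr)$, Lemma~\ref{sumofsq} applied with $s=p$ (valid once $p\ge 5$) shows that $\sum_j\binom{a_j}{2}$ can be made equal to any integer in $\bigl[0,\lfloor\frac{p^2-5p}{4}\rfloor\bigr]$. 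Hence, for each $p\ge 5$, this construction realizes every integer $b$ in $I_p:=\bigl[\binom{p+2}{2}-\lfloor\frac{p^2-5p}{4}\rfloor,\ \binom{p+2}{2}\bigr]$, via an $r\times 2$ matrix for any $r\ge p$.

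Next I would check that these intervals glue up. Comparing the left endpoint $\binom{p+2}{2}-\lfloor\frac{p^2-5p}{4}\rfloor$ of $I_p$ with the right endpoint $\binom{p+1}{2}$ of $I_{p-1}$, a short computation shows that $I_{p-1}$ and $I_p$ have overlapping or consecutive integer content for every $p\ge 9$; therefore every integer $b$ with $39\le b\le\binom{r+2}{2}$ lies in some $I_p$ with $8\le p\le r$, whenever $r\ge 8$, and is thus realized by an $(r,2,b)$-matrix. For $1\le b\le 38$ it suffices, by Lemma~\ref{rkbigger}, to produce one $(r_0,2,b)$-matrix with $r_0$ the least integer satisfying $\binom{r_0+2}{2}\ge b$; as $\binom{10}{2}=45$, this forces $r_0\le 8$, so only finitely many small matrices are needed and these are exhibited by a computer search. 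Combining the cases --- for $r\le 8$, every $b\le\binom{r+2}{2}\le 45$ is covered (those with $b\le 38$ by the base cases and Lemma~\ref{rkbigger}, the rest by $I_8=[39,45]$); for $r\ge 9$, the values $b\le 38$ are covered by Lemma~\ref{rkbigger} and the values $39\le b\le\binom{r+2}{2}$ by the construction --- proves the theorem for $k=2$.

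The main obstacle is that the construction is intrinsically top-heavy: a matrix with only $p$ nonzero rows has at most $\binom{p+2}{2}$ bases, and Lemma~\ref{sumofsq} provides freedom only in a window of width about $p^2/4$ below that maximum. Accordingly, the crux of the argument is the overlap check for the intervals $I_p$, which is exactly the point where one needs the bound $c\le(s^2-5s)/4$ in Lemma~\ref{sumofsq} to be quadratic with leading coefficient $\tfrac14$; once that is in hand, the set of values of $b$ missed by the construction is provably finite and can be cleared by machine.
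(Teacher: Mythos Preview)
Your proof is correct and follows essentially the same approach as the paper: the same construction (rows of the form $(1,t_j)$ with multiplicities, padded by zero rows), the same application of Lemma~\ref{sumofsq} to hit the interval $I_p=[\binom{p+2}{2}-\lfloor\tfrac{p^2-5p}{4}\rfloor,\binom{p+2}{2}]$, the same overlap check showing these intervals cover all $b\geq 39$, and the same reduction of the remaining small $b$ to a finite verification. The only difference is cosmetic: the paper identifies precisely which $b\leq 38$ are missed by the construction (namely $[1,20]\cup[22,26]\cup[29,32]\cup[37,38]$) and exhibits the matrices explicitly in a table, whereas you defer all $b\leq 38$ to an unspecified computer search.
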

\begin{proof}
It is trivial for $k=0$. If $k=1$, then let $M$ be a column vector with the first $b-1$ entries $1$'s and the rest $0$'s.

If $k=2$, let the first $s$ entries in the first column of $M$ be all $1$'s, the first $s$ entries in the second column be nonzero, and the rest of the entries be all $0$'s. Furthermore, assume that there are $a_i$ $i$'s in the second column, $1\leq i\leq s$, where $a_1+a_2+\dotsb+a_s=s$. Then the number of invertible square submatrices of $M$ is
\begin{center}
\begin{tabular}{rl}
$1+2s+\sum_{i<j}a_ia_j$& \hspace{-8pt}$=1+2s+\frac{1}{2}(\sum_ia_i)^2-\frac{1}{2}(\sum_ia_i^2)$\\
&\hspace{-8pt}$=1+2s+\frac{1}{2}s^2-\frac{1}{2}(\sum_ia_i^2)$,
\end{tabular}
\end{center}
and we would like to set it to be $b$, which gives $s+2(\binom{s+2}{2}-b)=\sum_ia_i^2$.

By Lemma~\ref{sumofsq}, if $0\leq\binom{s+2}{2}-b\leq\frac{s^2-5s}{4}$, or equivalently $\frac{s^2+11s+4}{4}\leq b\leq\frac{s^2+3s+2}{2}$, there is a solution for $a_i$'s. Note that $\frac{(s+1)^2+11(s+1)+4}{4}\leq\frac{s^2+3s+2}{2}$ if $s\geq9$, so $\overset{r}{\underset{s=9}{\bigcup}}\left[\frac{s^2+11s+4}{4},\frac{s^2+3s+2}{2}\right]$ forms one closed interval $\left[46,\binom{r+2}{2}\right]$. From simple checking, the only integers $b$ satisfying $1\leq b\leq\binom{r+2}{2}$ that are not covered by $\overset{r}{\underset{s=5}{\bigcup}}\left[\frac{s^2+11s+4}{4},\frac{s^2+3s+2}{2}\right]$ lie in $[1,20]\cup[22,26]\cup[29,32]\cup[37,38]$. Hence, we can finish the proof by constructing $M$ explicitly for each of these values of $b$. In the following table, $\bz$ represents a column vector of all $0$'s (possibly of length $0$), which fills up the column so that $M$ has $r$ rows.
\begin{center}
\footnotesize\begin{tabular}{|c|c|c|c|c|c|c|c|c|}
\hline
$b=$& $1$& $2$& $3$& $4$& $5$& $6$& $7$& $8$\\
\hline
& & & & & & & &\\
$M=$& \begin{tabular}{|c|c|}
\hline
\bz& \bz\\
\hline
\end{tabular}& \begin{tabular}{|c|c|}
\hline
1&0\\
\bz& \bz\\
\hline
\end{tabular}& \begin{tabular}{|c|c|}
\hline
1&1\\
\bz& \bz\\
\hline
\end{tabular}& \begin{tabular}{|c|c|}
\hline
1&0\\
0&1\\
\bz& \bz\\
\hline
\end{tabular}& \begin{tabular}{|c|c|}
\hline
1&1\\
1&0\\
\bz& \bz\\
\hline
\end{tabular}& \begin{tabular}{|c|c|}
\hline
1&1\\
1&2\\
\bz& \bz\\
\hline
\end{tabular}& \begin{tabular}{|c|c|}
\hline
1&1\\
1&0\\
1&0\\
\bz& \bz\\
\hline
\end{tabular}& \begin{tabular}{|c|c|}
\hline
1&1\\
1&1\\
1&0\\
\bz& \bz\\
\hline
\end{tabular}\\
& & & & & & & &\\
\hline
\end{tabular}
\end{center}
\begin{center}
\footnotesize\begin{tabular}{|c|c|c|c|c|c|c|c|c|}
\hline
$b=$& $9$& $10$& $11$& $12$& $13$& $14$& $15$& $16$\\
\hline
& & & & & & & &\\
$M=$& \begin{tabular}{|c|c|}
\hline
1&1\\
1&2\\
1&0\\
\bz& \bz\\
\hline
\end{tabular}& \begin{tabular}{|c|c|}
\hline
1&1\\
1&2\\
1&3\\
\bz& \bz\\
\hline
\end{tabular}& \begin{tabular}{|c|c|}
\hline
1&1\\
1&1\\
1&0\\
1&0\\
\bz& \bz\\
\hline
\end{tabular}& \begin{tabular}{|c|c|}
\hline
1&1\\
1&2\\
1&0\\
1&0\\
\bz& \bz\\
\hline
\end{tabular}& \begin{tabular}{|c|c|}
\hline
1&1\\
1&1\\
1&2\\
1&0\\
\bz& \bz\\
\hline
\end{tabular}& \begin{tabular}{|c|c|}
\hline
1&1\\
1&2\\
1&3\\
1&0\\
\bz& \bz\\
\hline
\end{tabular}& \begin{tabular}{|c|c|}
\hline
1&1\\
1&2\\
1&3\\
1&4\\
\bz& \bz\\
\hline
\end{tabular}& \begin{tabular}{|c|c|}
\hline
1&1\\
1&1\\
1&0\\
1&0\\
0&1\\
\bz& \bz\\
\hline
\end{tabular}\\
& & & & & & & &\\
\hline
\end{tabular}
\end{center}
\begin{center}
\footnotesize\begin{tabular}{|c|c|c|c|c|c|c|c|c|}
\hline
$b=$& $17$& $18$& $19$& $20$& $22$& $23$& $24$& $25$\\
\hline
& & & & & & & &\\
$M=$& \begin{tabular}{|c|c|}
\hline
1&1\\
1&1\\
1&2\\
1&0\\
1&0\\
\bz& \bz\\
\hline
\end{tabular}& \begin{tabular}{|c|c|}
\hline
1&1\\
1&2\\
1&3\\
1&0\\
1&0\\
\bz& \bz\\
\hline
\end{tabular}& \begin{tabular}{|c|c|}
\hline
1&1\\
1&1\\
1&2\\
1&3\\
1&0\\
\bz& \bz\\
\hline
\end{tabular}& \begin{tabular}{|c|c|}
\hline
1&1\\
1&2\\
1&3\\
1&4\\
1&0\\
\bz& \bz\\
\hline
\end{tabular}& \begin{tabular}{|c|c|}
\hline
1&1\\
1&1\\
1&1\\
1&2\\
1&0\\
1&0\\
\bz& \bz\\
\hline
\end{tabular}& \begin{tabular}{|c|c|}
\hline
1&1\\
1&1\\
1&2\\
1&2\\
1&0\\
1&0\\
\bz& \bz\\
\hline
\end{tabular}& \begin{tabular}{|c|c|}
\hline
1&1\\
1&1\\
1&2\\
1&3\\
1&0\\
1&0\\
\bz& \bz\\
\hline
\end{tabular}& \begin{tabular}{|c|c|}
\hline
1&1\\
1&2\\
1&3\\
1&4\\
1&0\\
1&0\\
\bz& \bz\\
\hline
\end{tabular}\\
& & & & & & & &\\
\hline
\end{tabular}
\end{center}
\begin{center}
\footnotesize\begin{tabular}{|c|c|c|c|c|c|c|c|c|}
\hline
$b=$& $26$& $29$& $30$& $31$& $32$& $37$& $38$\\
\hline
& & & & & & &\\
$M=$& \begin{tabular}{|c|c|}
\hline
1&1\\
1&1\\
1&2\\
1&3\\
1&4\\
1&0\\
\bz& \bz\\
\hline
\end{tabular}& \begin{tabular}{|c|c|}
\hline
1&1\\
1&1\\
1&2\\
1&3\\
1&0\\
1&0\\
1&0\\
\bz& \bz\\
\hline
\end{tabular}& \begin{tabular}{|c|c|}
\hline
1&1\\
1&2\\
1&3\\
1&4\\
1&0\\
1&0\\
1&0\\
\bz& \bz\\
\hline
\end{tabular}& \begin{tabular}{|c|c|}
\hline
1&1\\
1&1\\
1&2\\
1&2\\
1&3\\
1&0\\
1&0\\
\bz& \bz\\
\hline
\end{tabular}& \begin{tabular}{|c|c|}
\hline
1&1\\
1&1\\
1&2\\
1&3\\
1&4\\
1&0\\
1&0\\
\bz& \bz\\
\hline
\end{tabular}& \begin{tabular}{|c|c|}
\hline
1&1\\
1&1\\
1&2\\
1&2\\
1&3\\
1&0\\
1&0\\
1&0\\
\bz& \bz\\
\hline
\end{tabular}& \begin{tabular}{|c|c|}
\hline
1&1\\
1&1\\
1&2\\
1&3\\
1&4\\
1&0\\
1&0\\
1&0\\
\bz& \bz\\
\hline
\end{tabular}\\
& & & & & & &\\
\hline
\end{tabular}
\end{center}
\end{proof}

\begin{corollary}\label{corb<=r+2Cr}
Conjecture~$\ref{conj}$ holds under the additional condition that $b\leq\binom{r+2}{2}$. In other words, for all ordered triples of positive integers $(n,r,b)$ such that $1\leq b\leq\min\left\{\binom{n}{r},\binom{r+2}{2}\right\}$, an $(n,r,b)$-matrix exists.
\end{corollary}

\begin{proof}
By Theorem~$\ref{n<=r+2}$ and Lemma~$\ref{rkbigger}$, for all $1\leq b\leq\binom{r+2}{2}$, and for all $r$ and $k$ such that $b\leq\binom{r+k}{k}$, an $(r,k,b)^*$-matrix exists. Consequently, an $(n,r,b)$-matrix exists for all $b$ such that $1\leq b\leq\min\left\{\binom{n}{r},\binom{r+2}{2}\right\}$.
\end{proof}

\section{Existence of $(n,r,b)$-matrices with large $b$}\label{manybases}

In Section~$\ref{corank2}$, we used induction together with computational exhaustion to show the existence of $(n,r,b)$-matrices with $1\leq b\leq\min\left\{\binom{n}{r},\binom{r+2}{2}\right\}$. The main tool was Lemma~\ref{sumofsq}, a number theoretic argument to count the number of invertible square submatrices of $M$. In this section, we count the number of singular square submatrices of $M$ instead. This is mostly done by choosing hyperplanes from a $k$-dimensional vector space and picking row vectors on the hyperplanes carefully.

\begin{definition}\label{regular}
Let $\ell$ be a nonnegative integer, and let $n_0,n_1,n_2,\dotsc,n_\ell$ be nonnegative integers such that $n_1,n_2,\dotsc,n_\ell\geq k$ and $n_0+n_1+\dotsb+n_\ell=r$. Let $\{1,2,\dotsc,r\}$ be partitioned into $\mathcal{I}_0,\mathcal{I}_1,\mathcal{I}_2,\dotsc,\mathcal{I}_\ell$ such that \begin{center}
$\mathcal{I}_i=\{1+\sum_{\iota=0}^{i-1}n_\iota,2+\sum_{\iota=0}^{i-1}n_\iota,\dotsc,\sum_{\iota=0}^in_\iota\}$
\end{center}
for each $0\leq i\leq\ell$. An $r\times k$ matrix $M$ is \textit{$(n_0,n_1,\dotsc,n_\ell)$-regular} if
\begin{enumerate}
\item every $j\times j$ submatrix is invertible for all $1\leq j<k$, and
\item a $k\times k$ submatrix is singular if and only if all rows of this submatrix come from the same $\mathcal{I}_i$ for some $1\leq i\leq\ell$.
\end{enumerate}
\end{definition}

\begin{proposition}\label{regularexist}
Given a nonnegative integer $\ell$ and nonnegative integers $n_0,n_1,n_2,\dotsc,n_\ell$ such that $n_1,n_2,\dotsc,n_\ell\geq k$ and $n_0+n_1+\dotsb+n_\ell=r$, an $(n_0,n_1,\dotsc,n_\ell)$-regular matrix $M$ exists.
\end{proposition}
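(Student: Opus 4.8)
The plan is to construct $M$ by choosing its $r$ rows one at a time from carefully selected hyperplanes in $\F^k$ (for $\F = \Q$, or a suitably large field), so that the two conditions of Definition \ref{regular} hold. The key observation is that condition (1) — every $j\times j$ submatrix invertible for $j < k$ — is a "general position" requirement on the rows: it says that every $j$ of the $r$ rows span a $j$-dimensional space whose coordinate projections onto every size-$j$ subset of columns are nonsingular. This is an open dense condition, so generic choices of vectors satisfy it. Condition (2) is the delicate part: I want the rows in each block $\mathcal{I}_i$ (for $i \geq 1$) to lie on a common hyperplane $H_i \subset \F^k$ through the origin (a $(k-1)$-dimensional subspace), which forces every $k\times k$ submatrix with all rows from $\mathcal{I}_i$ to be singular; while the rows of $\mathcal{I}_0$ are in fully general position; and crucially, no $k$ rows drawn from two or more distinct blocks are linearly dependent.

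Here is the construction in order. First I fix, for each $i = 1, \dotsc, \ell$, a generic hyperplane $H_i$ (a generic $(k-1)$-dimensional subspace of $\F^k$); "generic" means the $H_i$ are chosen so that any $k$ of the chosen subspaces, counted with multiplicity but using at most $k-1$ rows from any one $H_i$, are in general position — concretely, one can take $H_i = \{x : \langle v_i, x\rangle = 0\}$ for vectors $v_i$ in general position. Then I choose the $n_i$ rows in block $\mathcal{I}_i$ to be generic vectors inside $H_i$, and the $n_0$ rows in $\mathcal{I}_0$ to be generic vectors in $\F^k$. "Generic" throughout means: avoiding a finite union of proper algebraic subvarieties, which is possible over $\Q$ since $\Q$ is infinite (and each forbidden variety is cut out by the vanishing of some minor, a nonzero polynomial — one must check each such polynomial is not identically zero on the relevant parameter space, which is where the genericity of the $H_i$ enters). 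Under these generic choices: (a) every $j\times j$ submatrix with $j \leq k-1$ is invertible, since within any single $H_i$ the rows still vary in a $(k-1)$-dimensional space, enough to make all $(k-1)$-minors nonzero; (b) a $k\times k$ submatrix whose rows all come from $\mathcal{I}_i$ is automatically singular because all its rows lie in the $(k-1)$-dimensional space $H_i$; and (c) a $k\times k$ submatrix whose rows come from at least two distinct blocks is nonsingular for generic choices — this is the assertion that needs the relevant $k\times k$ minor to be a nonzero polynomial in the row parameters, which follows by exhibiting a single choice making it nonzero (e.g. perturbing one row off its hyperplane, or a direct rank count using that the ambient span of rows from $\geq 2$ blocks is all of $\F^k$).

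The main obstacle is step (c): verifying that for rows drawn from two or more distinct blocks, the $k\times k$ determinant is not identically zero as a polynomial in the free parameters of those rows — equivalently, that the generic such configuration has full rank. The danger is that an unlucky a priori choice of the hyperplanes $H_i$ could force an unexpected linear dependence among rows from different blocks (for instance if two hyperplanes share too large an intersection, or if several $v_i$ are linearly dependent). The fix is to choose the $v_i$ (hence the $H_i$) themselves generically first, establish that with this choice the relevant determinant polynomials in the remaining row-parameters are not identically zero, and only then choose the rows generically; since all of this happens over the infinite field $\Q$, a finite sequence of "avoid finitely many hypersurfaces" steps succeeds. One should also double-check the boundary cases $k=1$ (where condition (1) is vacuous and every $1\times 1$ block submatrix being "singular" means the corresponding entry is $0$ — so the $H_i$ are all $\{0\}$ and blocks $\mathcal{I}_i$ have zero rows, while $\mathcal{I}_0$ has nonzero entries) and the requirement $n_i \geq k$, which guarantees each block genuinely contains a singular $k\times k$ submatrix so that the "only if" direction of condition (2) has content.
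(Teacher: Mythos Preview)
Your proposal is correct and takes essentially the same approach as the paper: choose hyperplanes $H_i=\{x:\langle v_i,x\rangle=0\}$ with the normals $v_i$ in general position (the paper asks explicitly that each coordinate of $v_i$ be nonzero and that any $k$ of the $v_i$ be linearly independent), then pick the rows of each block generically inside the corresponding $H_i$ and argue that the regularity conditions define a nonempty Zariski-open subset of this parameter space. The only substantive difference is in how the key nonvanishing for mixed $k\times k$ minors (your step~(c)) is justified: the paper works in the coordinate ring $Q$ of the constrained parameter space and argues algebraically, whereas you propose exhibiting a single nondegenerate configuration---note that your ``perturb one row off its hyperplane'' suggestion does not work as stated (it leaves the parameter space $W$), but your alternative ``direct rank count'' does.
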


\begin{proof}
Treat the set of $r\times k$ matrices with complex entries as the affine space $X=(\C^k)^{n_0}\times(\C^k)^{n_1}\times(\C^k)^{n_2}\times\dotsc\times(\C^k)^{n_\ell}$. For each $1\leq i\leq\ell$ and $1\leq j\leq k$, let $c_{ij}=i^{j-1}$. Since $(c_{ij})_{\ell\times k}$ is a Vandermonde matrix, any subset of at most $k$ vectors in $\{(c_{i1},c_{i2},\dotsc,c_{ik}):1\leq i\leq\ell\}$ is linearly independent. For each $1\leq i\leq\ell$, let $H_i$ be the hyperplane in $\C^k$ defined by $ c_{i1}x_1+c_{i2}x_2+\dotsb+c_{ik}x_k=0$.

Let $m_{\alpha\beta}, 1\leq\alpha\leq r, 1\leq\beta\leq k$, be the $(\alpha,\beta)$-coordinate functions of $X$. In other words, we treat $M$ as a matrix $(m_{\alpha\beta})_{r\times k}$ of functions, where every entry $m_{\alpha\beta}$ is a variable for all $1\leq\alpha\leq r$ and $1\leq\beta\leq k$. Then every determinant of square submatrices of $M$ is naturally a polynomial function on $X$. Consider the subspace $W=(\C^k)^{n_0} \times H_1^{n_1} \times \cdots \times H_\ell^{n_\ell}$ of $X$. Since $R=\C[m_{\alpha\beta}:1\leq\alpha\leq r,1\leq\beta\leq k]$ is the ring of regular functions on $X$, we can naturally define $Q=\C[m_{\alpha\beta}:1\leq\alpha\leq r,1\leq\beta\leq k]/\langle c_{i1}m_{\alpha1}+c_{i2}m_{\alpha2}+\dotsb+c_{ik}m_{\alpha k}=0\text{ for all }\alpha\in\mathcal{I}_i,\text{ where }1\leq i\leq\ell\rangle$ as the ring of regular functions on $W$. For any $f\in R$, denote its restriction on $W$ by $f_W \in Q$.


For any $j\times j$ submatrix $B$ of $M$, let $B$ come from rows $\alpha_1,\alpha_2,\dotsc,\alpha_j$ and columns $\beta_1,\beta_2,\dotsc,\beta_j$ in $M$. Assume that $\alpha_t\in\mathcal{I}_{i_t}$ for each $1\leq t\leq j$, where $i_1\leq i_2\leq\dotsb\leq i_j$. Furthermore, assume that $\beta_1<\beta_2<\dotsb<\beta_j$. We would like to show that $(\det B)_W$ is a nonzero function in $Q$ if either $i_1<i_j$ or $i_1=i_j$ and $j<k$, so that the complement of the zero locus of $(\det B)_W$ defines a nonempty Zariski open subset of $W$.

If $i_1<i_j$, consider a $j\times j$ matrix
$$\widetilde{B}=\begin{pmatrix}
-i_1^{\beta_j-\beta_1}&0&\dotsb&0&\dotsb&0&1\\
-i_2^{\beta_{j-1}-\beta_1}&0&\dotsb&0&\dotsb&1&0\\
\vdots&\vdots&\udots&\vdots&\udots&\vdots&\vdots\\
-i_t^{\beta_{j-t+1}-\beta_1}&0&\dotsb&1&\dotsb&0&0\\
\vdots&\vdots&\udots&\vdots&\udots&\vdots&\vdots\\
-i_{j-1}^{\beta_2-\beta_1}&1&\dotsb&0&\dotsb&0&0\\
i_j^{\beta_j-\beta_1}+i_j^{\beta_{j-1}-\beta_1}+\dotsb+i_j^{\beta_2-\beta_1}&-1&\dotsb&-1&\dotsb&-1&-1\\
\end{pmatrix}.$$
When calculating the determinant of $\widetilde{B}$, we can add the first $j-1$ rows to the last, so $|\det\widetilde{B}|=\sum_{t=1}^{j-1}i_j^{\beta_{j-t+1}-\beta_1}-\sum_{t=1}^{j-1}i_t^{\beta_{j-t+1}-\beta_1}>0$. Let $\widetilde{M}$ be the $r \times k$ matrix such that its $j \times j$ submatrix with rows $\alpha_1,\alpha_2,\dotsc,\alpha_j$ and columns $\beta_1,\beta_2,\dotsc,\beta_j$ is $\widetilde{B}$, and all its other entries are zero. Note that $\widetilde{M}$ lies on $W$, since the $\alpha_t$-th row vector is orthogonal to the vector $(c_{i_t1},c_{i_t2},\dotsc,c_{i_tk})$. Also, when $\det B$ is evaluated at $\widetilde{M}$, the result is a nonzero value $\det \widetilde{B}$. Thus, $(\det B)_W$ is a nonzero function in $Q$. 

If $i_1=i_j$ and $j<k$, then let $\beta_0\leq k$ be a positive integer distinct from $\beta_1,\beta_2,\dotsc,\beta_j$. Let $\doublewidetilde{M}$ be the $r\times k$ matrix such that its $j\times j$ submatrix with rows $\alpha_1,\alpha_2,\dotsc,\alpha_j$ and columns $\beta_1,\beta_2,\dotsc,\beta_j$ is the $j \times j$ identity matrix, its $(\alpha_t,\beta_0)$-entry takes the value $-i_1^{\beta_t-\beta_0}$ for all $1\leq t\leq j$, and all its other entries are zero. Then we have a similar conclusion that $\doublewidetilde{M}$ lies on $W$, and that when $\det B$ is evaluated at $\doublewidetilde{M}$, the result is $1$. Thus, $(\det B)_W$ is again a nonzero function in $Q$.


In conclusion, the complement of the zero locus of each such $(\det B)_W$ defines a nonempty Zariski open subset of $W$. It is well-known that all nonempty Zariski open subsets of $W$ are dense in $W$, so any finite intersection of such is still nonempty. Therefore, the common intersection $U$ of the complements of the zero loci of $(\det B)_W$, where $B$ runs through all $j \times j$ submatrices of $M$ for $1\leq j<k$ and all $k \times k$ submatrices of $M$ whose rows do not come from the same $\mathcal{I}_i$ for some $1\leq i\leq\ell$, is nonempty. 

Finally, from our construction, every matrix from $W$ satisfies the condition that all $k \times k$ submatrices whose rows come from the same $\mathcal{I}_i$ are singular, since the $k$ row vectors lie on the same hyperplane $H_i$ and must be linearly dependent. As a result, any point on $U$ forms an $(n_0,n_1,\dotsc,n_\ell)$-regular matrix.

\end{proof}

Given an ordered triple $(r,k,b)$, if we want to construct an $(r,k,b)^*$-matrix, we first try to look for nonnegative integers $a_k,a_{k+1},\dotsc,a_r$ such that $\overset{r}{\underset{s=k}{\sum}}a_ss\leq r$ and $\overset{r}{\underset{s=k}{\sum}}a_s\binom{s}{k}=\binom{r+k}{k}-b=\overline{b}$, which denotes the number of singular square submatrices in $M$. If such nonnegative integers exist, then we define $(n_0,n_1,\dotsc,n_\ell)$ as follows: let $n_0=r-\overset{r}{\underset{s=k}{\sum}}a_ss\geq0$, and for each $s$ in the range $k\leq s\leq r$, let $n_i=s$ for all $i$ that satisfies $1+\overset{s-1}{\underset{j=k}{\sum}}a_j\leq i\leq\overset{s}{\underset{j=k}{\sum}}a_j$. By Proposition~\ref{regularexist}, an $(n_0,n_1,\dotsc,n_\ell)$-regular matrix $M$ exists, and this $M$ is an $(r,k,b)^*$-matrix by our construction. In particular, since the only singular square submatrices in $M$ are those $k\times k$ submatrices with all rows coming from the same $\mathcal{I}_i$, we have $\overline{b}=\overset{\ell}{\underset{i=1}{\sum}}\binom{n_i}{k}=\overset{r}{\underset{s=k}{\sum}}a_s\binom{s}{k}$.

\begin{proposition}\label{AsymEst}
For each fixed integer $k\geq3$, there exists $r_0\in\N$ such that for all integers $r\geq r_0$ and $0\leq\overline{b}\leq\binom{r+k-1}{k-1}$, there exist nonnegative integers $a_k,a_{k+1},\dotsc,a_r$ such that

\textup{(}i\textup{)} $\overset{r}{\underset{s=k}{\sum}}a_s\binom{s}{k}=\overline{b}$, and

\textup{(}ii\textup{)} $\overset{r}{\underset{s=k}{\sum}}a_ss\leq r$.
\end{proposition}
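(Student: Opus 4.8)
The plan is to reduce the proposition to an $r$-independent fact: that a greedy representation of an integer $N$ as a sum of binomial coefficients $\binom{m}{k}$ with $m\ge k$ has total weight $\sum_i m_i$ only $O_k(N^{1/k})$. Since $\binom{r+k-1}{k-1}=O_k(r^{k-1})$, this weight is $o(r)$ whenever $N\le\binom{r+k-1}{k-1}$, which gives (ii), while hitting $N=\overline{b}$ exactly gives (i).

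The key lemma I would prove is: for each $k\ge1$ there is a constant $C_k$ such that every positive integer $N$ can be written as $N=\binom{m_1}{k}+\dotsb+\binom{m_\ell}{k}$ with all $m_i\ge k$ and $\sum_{i=1}^\ell m_i\le C_kN^{1/k}$. The construction is greedy: repeatedly subtract from the running target the largest $\binom{s}{k}$ not exceeding it. If $s$ is chosen at some step, maximality forces $\binom{s+1}{k}$ to exceed the current target, so Pascal's identity makes the new target strictly less than $\binom{s+1}{k}-\binom{s}{k}=\binom{s}{k-1}\le s^{k-1}/(k-1)!$; and $\binom{s}{k}\ge(s-k+1)^k/k!$ gives $s\le(k!N)^{1/k}+k$. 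Thus, writing $W(N)$ for the total weight produced, $W(N)\le s+W(N')$ with $s\le c_k'N^{1/k}$ and $N'<c_kN^{1-1/k}$ for constants $c_k'$ and $c_k$ depending only on $k$. Because one step cuts the target from order $N$ down to order $N^{1-1/k}$, the bound $W(N)\le C_kN^{1/k}$ follows by strong induction on $N$: the inductive step requires $c_k'N^{1/k}+C_k(c_kN^{1-1/k})^{1/k}\le C_kN^{1/k}$, and since the exponent $(k-1)/k^2$ in the middle term is strictly below $1/k$, this holds for all $N$ beyond a $k$-dependent threshold as long as $C_k\ge2c_k'$; the finitely many remaining $N$ are absorbed into $C_k$ using the trivial bound $W(N)\le kN$ (each part $m_i\le k\binom{m_i}{k}$). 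Termination is clear since the target strictly decreases, and each part has size at least $k$ by construction.

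With the lemma in hand, the proposition follows at once. For $\overline{b}=0$ take every $a_s=0$. For $1\le\overline{b}\le\binom{r+k-1}{k-1}$, apply the lemma with $N=\overline{b}$ to obtain $m_1,\dotsc,m_\ell\ge k$ with $\sum_i\binom{m_i}{k}=\overline{b}$ and $\sum_i m_i\le C_k\overline{b}^{1/k}$. Since $\binom{r+k-1}{k-1}\le(r+k-1)^{k-1}/(k-1)!\le(2r)^{k-1}/(k-1)!$ for $r\ge k$, we get $\sum_i m_i\le C_k(2r)^{(k-1)/k}/((k-1)!)^{1/k}=:K_kr^{1-1/k}$, a quantity that is at most $r$ once $r\ge K_k^k$. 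Setting $r_0:=\max(k,\lceil K_k^k\rceil)$, for every $r\ge r_0$ we then have $\sum_i m_i\le r$, so in particular each $m_i\le r$; defining $a_s:=\#\{i:m_i=s\}$ for $k\le s\le r$ yields $\sum_{s=k}^r a_s\binom{s}{k}=\overline{b}$ and $\sum_{s=k}^r a_ss=\sum_i m_i\le r$, which are precisely (i) and (ii).

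The main obstacle is the lemma, and in particular squeezing the greedy weight down to $O_k(N^{1/k})$. A tempting but wasteful route peels off a single large part $\binom{s}{k}$ with $s\approx(k!N)^{1/k}$ and then handles the remainder by invoking the corank $k-1$ analogue of the present proposition; a careful exponent count shows this loses by an $\varepsilon$ and does not keep the weight below $r$. The point of the greedy argument is that one keeps recursing with the \emph{same} $k$: the remainder shrinks superlinearly each step, so the total weight is dominated by the first part alone, of size $O_k(N^{1/k})=o(r)$. Everything else is routine bookkeeping — the per-step Pascal estimate, and choosing $C_k$ large enough to close the strong induction over the finitely many small cases.
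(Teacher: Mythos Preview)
Your proof is correct and follows essentially the same greedy approach as the paper: repeatedly subtract the largest $\binom{s}{k}$ not exceeding the current target, and show the resulting total weight $\sum s_i$ is $o(r)$. The paper bounds the weight by (number of steps)$\times$(first step) $=O(\log\log N)\cdot O(N^{1/k})$, whereas your strong-induction argument exploits the superlinear shrinkage $N'\lesssim N^{1-1/k}$ to get the sharper $O(N^{1/k})$; both suffice, and your packaging of the bound as an $r$-independent lemma is a clean touch.
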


\begin{proof}
Let $K=(k^{\frac{1}{k}}+1)^k+1$. Note that $K>(k^{\frac{1}{k}}+1)^k>\big(k^{\frac{1}{k}}\big)^k+k\cdot k^{\frac{1}{k}}>2k$. Let $\widetilde{m}=\big\lceil\big(\log\log(r+k-1)-\log\log\big(\tfrac{K}{(k^{1/k}+1)^k}\big)\big)\big/\log\big(\tfrac{k}{k-1}\big)-1\big\rceil$. Since
$$\widetilde{m}\big(k^{\frac{1}{k}}+1\big)(r+k-1)^{\frac{k-1}{k}}+k\tbinom{K}{k}=O\big(r^{\frac{k-1}{k}}\log\log r\big)=o(r),$$
there exists $r_0\in\N$ such that for all integers $r\geq r_0$, $r\geq\widetilde{m}\big(k^{\frac{1}{k}}+1\big)(r+k-1)^{\frac{k-1}{k}}+k\tbinom{K}{k}.$ From this point onwards in this proof, consider $r\geq r_0$.

Let $\overline{b}$ be an integer such that $0\leq\overline{b}\leq\binom{r+k-1}{k-1}$. Let $\overline{b}_1=\overline{b}$. For each integer $i\geq1$, if $\overline {b}_i\geq\binom{K}{k}$, then let $s_i\in\N$ be such that $\binom{s_i}{k}\leq\overline{b}_i<\binom{s_i+1}{k}$. Note that $1\leq\overline{b}_i\big/\binom{s_i}{k}<\binom{s_i+1}{k}\big/\binom{s_i}{k}$, which is less than $2$ since $\binom{2k}{k}<\binom{K}{k}<\binom{s_i+1}{k}$, implying that $2k<s_i+1$, and hence $\binom{s_i+1}{k}=\frac{s_i+1}{s_i-k+1}\binom{s_i}{k}<\frac{2k}{k}\binom{s_i}{k}=2\binom{s_i}{k}$. Let $a_{s_i}=1$ and $\overline{b}_{i+1}=\overline{b}_i-\binom{s_i}{k}$. If $\overline{b}_i<\binom{K}{k}$ for some $i\geq1$, then let $a_k=\overline{b}_i$ and all other undetermined $a_s$ be $0$.

From this definition, condition (\textit{i}) is clearly satisfied. For condition (\textit{ii}), $\overset{r}{\underset{s=k}{\sum}}a_ss=s_1+s_2+\dotsb+s_m+\overline{b}_{m+1}\cdot k$ for some $m\geq0$. To give an upper bound to this sum, let $f_k(x)=\binom{x}{k}$ and $f_{k-1}(x)=\binom{x}{k-1}$ be functions on the real line. Note that they are both strictly increasing functions when $x\geq k$, so the function $f_k$ has an inverse $f_k^{-1}$ when $x\geq k$, and the composition $f_k^{-1}\circ f_{k-1}$ is also a strictly increasing function.

For all $i\geq1$, $\overline{b}_{i+1}=\overline{b}_i-\binom{s_i}{k}<\binom{s_i+1}{k}-\binom{s_i}{k}=\binom{s_i}{k-1}$. Hence,
\begin{equation}\label{eq:sirepeat}
s_1\leq(f_k^{-1}\circ f_{k-1})(r+k-1)\text{ and }s_{i+1}\leq f_k^{-1}(\overline{b}_{i+1})<(f_k^{-1}\circ f_{k-1})(s_i).
\end{equation}
When $x>2k$,
$$f_k^{-1}\circ f_{k-1}(x)<f_k^{-1}\Big(\tfrac{x^{k-1}}{(k-1)!}\Big)=f_k^{-1}\Big(\tfrac{\left(k^{1/k}x^{(k-1)/k}\right)^k}{k!}\Big)<k^{\frac{1}{k}}x^{\frac{k-1}{k}}+k-1.$$
Notice that $k-1<2^{k-1}$, so $(k-1)^k<2^{k-1}k^{k-1}$, implying $k-1<(2k)^{\frac{k-1}{k}}<x^{\frac{k-1}{k}}$. Therefore,
\begin{equation}\label{eq:fkineq}
f_k^{-1}\circ f_{k-1}(x)<\big(k^{\frac{1}{k}}+1\big)x^{\frac{k-1}{k}}.
\end{equation}
By the definition of $r$ and $s_i$, we have $r+k-1\geq K>2k$ and $s_i\geq K>2k$ for all $1\leq i\leq m$. Thus, combining inequalities $\eqref{eq:sirepeat}$ and $\eqref{eq:fkineq}$, we have
$$s_1<\big(k^{\frac{1}{k}}+1\big)(r+k-1)^{\frac{k-1}{k}}$$
and
\begin{align}\label{eq:siineq}
s_i\leq(f_k^{-1}\circ f_{k-1})^i(r+k-1)& <\big(k^{\frac{1}{k}}+1\big)^{1+\frac{k-1}{k}+(\frac{k-1}{k})^2+\dotsb+(\frac{k-1}{k})^{i-1}}(r+k-1)^{(\frac{k-1}{k})^i}\nonumber\\
& <\big(k^{\frac{1}{k}}+1\big)^k(r+k-1)^{(\frac{k-1}{k})^i}.
\end{align}

Assume that $m>\widetilde{m}$. By inequality \eqref{eq:siineq}, $(f_k^{-1}\circ f_{k-1})(s_{\widetilde{m}})<\big(k^{\frac{1}{k}}+1\big)^k(r+k-1)^{(\frac{k-1}{k})^{\widetilde{m}+1}}$, which is less than or equal to $K$ by the definition of $\widetilde{m}$. This implies that $\binom{s_{\widetilde{m}}}{k-1}<\binom{K}{k}$. Since $\overline{b}_{\widetilde{m}+1}<\binom{s_{\widetilde{m}}}{k-1}$, we have $\overline{b}_{\widetilde{m}+1}<\binom{K}{k}$. However, $m$ is by definition the least nonnegative integer such that $\overline{b}_{m+1}<\binom{K}{k}$, which is a contradiction. Therefore, $m\leq\widetilde{m}$, and
\begin{center}
$\overset{r}{\underset{s=k}{\sum}}a_ss=s_1+s_2+\dotsb+s_m+\overline{b}_{m+1}\cdot k\leq ms_1+k\binom{K}{k}<\widetilde{m}\big(k^{\frac{1}{k}}+1\big)(r+k-1)^{\frac{k-1}{k}}+k\binom{K}{k}$,
\end{center}
which does not exceed $r$ when $r\geq r_0$. In other words, condition (\textit{ii}) is also satisfied.
\end{proof}

This immediately gives the following asymptotic result on the existence of $(n,r,b)$-matrices.

\begin{theorem}\label{blarge}
For each fixed integer $k\geq3$, let $r_0\in\N$ be as given by Proposition~$\ref{AsymEst}$. Then for all integers $r\geq r_0$ and $\binom{r_0+k-1}{k}\leq b\leq\binom{n}{r}$, an $(n,r,b)$-matrix exists, where $n=r+k$.
\end{theorem}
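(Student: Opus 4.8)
The plan is to bootstrap Proposition~\ref{AsymEst} into a statement about $(r,k,b)$-matrices, then propagate it upward in $r$ via the extension-by-zero construction of Lemma~\ref{rkbigger}, and finally glue the resulting ranges of admissible $b$ together using Pascal's identity.

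First I would recast Proposition~\ref{AsymEst} in the language of matrices. Recall the recipe described just before that proposition: for any $r'\geq k$ and any integer $b$ with $1\leq b\leq\binom{r'+k}{r'}$, a choice of nonnegative integers $a_k,a_{k+1},\dots,a_{r'}$ with $\sum_{s=k}^{r'}a_ss\leq r'$ and $\sum_{s=k}^{r'}a_s\binom{s}{k}=\binom{r'+k}{r'}-b$ produces, via Proposition~\ref{regularexist}, an $(r',k,b)$-matrix. Feeding $\overline b=\binom{r'+k}{r'}-b$ into Proposition~\ref{AsymEst}, such integers exist for every $r'\geq r_0$ precisely when $0\leq\binom{r'+k}{r'}-b\leq\binom{r'+k-1}{k-1}$. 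Since $\binom{r'+k}{r'}=\binom{r'+k}{k}$ and, by Pascal's rule, $\binom{r'+k}{k}-\binom{r'+k-1}{k-1}=\binom{r'+k-1}{k}$, this condition is exactly $\binom{r'+k-1}{k}\leq b\leq\binom{r'+k}{k}$. So the conclusion of this step is: for every $r'\geq r_0$ there is an $(r',k,b)$-matrix whenever $b$ lies in the interval $J_{r'}:=\bigl[\binom{r'+k-1}{k},\binom{r'+k}{k}\bigr]$.

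Next I would fix $r\geq r_0$ (enlarging $r_0$ at the outset if necessary so that $r_0\geq k$, which costs nothing). For each $r'$ with $r_0\leq r'\leq r$ and each integer $b\in J_{r'}$, the previous step gives an $(r',k,b)$-matrix, and Lemma~\ref{rkbigger}, applied with the corank held fixed at $k$, upgrades it to an $(r,k,b)$-matrix. Hence an $(r,k,b)$-matrix exists for every $b\in\bigcup_{r'=r_0}^{r}J_{r'}$. The elementary point that makes this union an interval is that the right endpoint $\binom{r'+k}{k}$ of $J_{r'}$ equals the left endpoint of $J_{r'+1}$, so the intervals $J_{r_0},J_{r_0+1},\dots,J_r$ chain together without gaps and $\bigcup_{r'=r_0}^r J_{r'}=\bigl[\binom{r_0+k-1}{k},\binom{r+k}{k}\bigr]$. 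Since the existence of a linear $(n,r,b)$-matroid with $n=r+k$ is equivalent to the existence of an $(r,k,b)$-matrix, and since $\binom{r+k}{k}=\binom{n}{r}$ is the largest $b$ for which any $(n,r,b)$-matroid can exist, this proves that linear $(r+k,r,b)$-matroids exist for all $b\geq\binom{r_0+k-1}{k}$.

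The whole argument is short once Proposition~\ref{AsymEst} is available, so there is no real obstacle; the only thing to be careful about is the bookkeeping — carrying out the Pascal simplification correctly and verifying that consecutive intervals $J_{r'}$ abut exactly (sharing a single endpoint) rather than overlapping wastefully or, worse, leaving a hole, and recording that $r_0$ may be taken $\geq k$ so that every intermediate $(r',k,b)$-matrix assertion is meaningful.
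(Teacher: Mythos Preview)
The proposal is correct and follows essentially the same approach as the paper's proof: convert Proposition~\ref{AsymEst} into the existence of $(r',k,b)$-matrices for $b\in\bigl[\binom{r'+k-1}{k},\binom{r'+k}{k}\bigr]$ via the regular-matrix construction, then chain these abutting intervals over $r'=r_0,\dots,r$ to cover $\bigl[\binom{r_0+k-1}{k},\binom{r+k}{k}\bigr]$. Your write-up is in fact a little more explicit than the paper's in invoking Lemma~\ref{rkbigger} to upgrade $(r',k,b)$-matrices to $(r,k,b)$-matrices, a step the paper uses tacitly.
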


\begin{proof}
By Proposition~$\ref{AsymEst}$, for every integer $r\geq r_0$, for all integers $b$ such that $\binom{r+k-1}{k}=\binom{r+k}{k}-\binom{r+k-1}{k-1}\leq b\leq\binom{r+k}{k}$, we can construct an $(r,k,b)^*$-matrix following the procedures introduced before Proposition~\ref{AsymEst}. Finally, we are done by noticing that the intervals $\big[\binom{r_0+k-1+i}{k},\binom{r_0+k+i}{k}\big]$, $i=0,1,\dotsc,r-r_0$, cover all integers $\binom{r_0+k-1}{k}\leq b\leq\binom{r+k}{k}=\binom{n}{r}$.
\end{proof}

\section{Existence of $(n,r,b)$-matrices with corank at most $3$}\label{corank3}

In this section, we will prove that apart from $(r,k,b)=(3,3,11)$, $(r,k,b)^*$-matrices always exist if $k=3$. To do so, we first refine Proposition~$\ref{AsymEst}$ for the case $k=3$ by finding an explicit value for $r_0$.

\begin{proposition}\label{Indk=3}
For all integers $r\geq49$ and $0\leq\overline{b}\leq\binom{r+2}{2}$, there exist nonnegative integers $a_3,a_4,\dotsc,a_r$ such that

\textup{(}i\textup{)} $\overset{r}{\underset{s=3}{\sum}}a_s\binom{s}{3}=\overline{b}$, and

\textup{(}ii\textup{)} $\overset{r}{\underset{s=3}{\sum}}a_ss\leq r$.
\end{proposition}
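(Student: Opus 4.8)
The plan is to run a greedy algorithm, exactly as in the proof of Proposition~\ref{AsymEst}, but with $k=3$ fixed, and to track the constants explicitly so that $r_0=49$ works. Starting from $\overline{b}_1=\overline{b}$, at step $i$ we pick the largest $s_i$ with $\binom{s_i}{3}\leq\overline{b}_i$, set $a_{s_i}=1$, and put $\overline{b}_{i+1}=\overline{b}_i-\binom{s_i}{3}$; we stop as soon as $\overline{b}_i<\binom{K}{3}$ for a suitable small threshold $K$, at which point we absorb the remainder into $a_3$ by setting $a_3:=\overline{b}_i$ (since each remaining unit contributes $\binom{3}{3}=1$ to the sum in (i)). Condition~(i) is then immediate; the whole content is the bound (ii): $\sum a_s s = s_1+\dots+s_m+3\,\overline{b}_{m+1}\leq r$.

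For (ii) the key recursive estimate is the one already derived in the proof of Proposition~\ref{AsymEst}: since $\overline{b}_{i+1}<\binom{s_i+1}{3}-\binom{s_i}{3}=\binom{s_i}{2}$, we get $s_{i+1}<(f_3^{-1}\circ f_2)(s_i)$ where $f_3(x)=\binom{x}{3}$, $f_2(x)=\binom{x}{2}$. With $k=3$ one has the concrete inequality $(f_3^{-1}\circ f_2)(x)<3^{1/3}x^{2/3}+2<(3^{1/3}+1)x^{2/3}$ for $x$ past a small explicit point, so $s_1<(3^{1/3}+1)(r+2)^{2/3}$ and, iterating, $s_i<(3^{1/3}+1)^3 (r+2)^{(2/3)^i}$. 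I would then choose the cutoff $K$ (something like $K=(3^{1/3}+1)^3+1$, i.e.\ the $k=3$ instance of the $K$ in Proposition~\ref{AsymEst}, which is a fixed number around $17$) so that once the exponent $(2/3)^i$ has brought $(3^{1/3}+1)^3(r+2)^{(2/3)^i}$ below $K$ the greedy loop has already terminated; this bounds the number of greedy steps by an explicit $\widetilde m=\widetilde m(r)=O(\log\log r)$, and gives $\sum a_s s\leq \widetilde m\,s_1 + 3\binom{K}{3}<\widetilde m\,(3^{1/3}+1)(r+2)^{2/3}+3\binom{K}{3}$. It then remains to verify the single inequality
$$\widetilde m\,(3^{1/3}+1)(r+2)^{2/3}+3\tbinom{K}{3}\leq r\qquad\text{for all }r\geq49,$$
which is a monotone comparison of an $O(r^{2/3}\log\log r)$ quantity against $r$; one checks it directly at $r=49$ (a finite computation, aided by Mathematica for the handful of values of $\overline b$ near the small threshold where the estimates are loosest) and then notes the right side grows faster.

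The main obstacle is purely quantitative: Proposition~\ref{AsymEst} only asserts \emph{existence} of $r_0$, and pushing $r_0$ all the way down to $49$ requires the estimates on $(f_3^{-1}\circ f_2)$ to be sharp rather than merely asymptotic, and requires care about the ``boundary'' regime where $s_i$ is small (say $s_i\leq 2k-1=5$), where $\binom{s_i+1}{3}/\binom{s_i}{3}$ need not be below~$2$ and the clean recursion degrades. I would handle that small regime separately — it is exactly where the stopping rule with threshold $K$ kicks in, so those cases never actually arise in the iteration — and, if the clean bound is still slightly too weak for $49\leq r\lesssim$ a few hundred, finish that finite range by an explicit (computer-assisted) construction of the $a_s$, just as Lemma~\ref{sumofsq} and Theorem~\ref{n<=r+2} dispatch their small cases by hand. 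Everything past that range is covered by the asymptotic comparison above.
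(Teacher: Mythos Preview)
Your approach---making Proposition~\ref{AsymEst} quantitative for $k=3$ by iterating the greedy step and cutting off at a fixed threshold $K$---is sound in principle, but the quantitative claim is badly off. With $K=(3^{1/3}+1)^3+1\approx 16$ the tail term $3\binom{K}{3}$ alone is already about $1680$, and the inequality $\widetilde m\,(3^{1/3}+1)(r+2)^{2/3}+3\tbinom{K}{3}\leq r$ does not hold until $r$ is in the tens of thousands, not ``a few hundred.'' So the computer-assisted base case your hedge allows for would be orders of magnitude larger than you suggest, and you would need genuinely sharper estimates than those of Proposition~\ref{AsymEst} to bring it down; your sketch does not supply them.

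The paper takes a different and much tighter route: \emph{strong induction} rather than an unrolled iteration. After checking $49\leq r\leq 203$ computationally, the inductive step for $\mathcal P(r+1)$ performs just \emph{one} greedy subtraction $\overline b'=\overline b-\binom{s_0}{3}$ and then invokes the already-established hypothesis $\mathcal P(s_0-2)$ on the residue $\overline b'<\binom{s_0}{2}$. The whole burden reduces to two clean inequalities: $s_0\geq 51$ (so that $s_0-2\geq 49$ and the hypothesis applies), which follows from $r\geq 203$; and $2s_0-2\leq r+1$, which follows from the elementary identity $\tbinom{(x+3)/2}{3}-\tbinom{x+3}{2}=\tfrac{1}{48}(x+3)(x^2-24x-49)\geq 0$ for integers $x\geq 26$. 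This single recursive call, with the sharp bound $s_0\leq(r+3)/2$, replaces your entire $\widetilde m$-step iteration and is why the paper's base-case range is only $49\leq r\leq 203$.
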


\begin{proof}
Let the statement of the proposition be denoted by $\mathcal{P}(r)$ for $r\geq49$. We checked computationally with Mathematica that $\mathcal{P}(r)$ holds for $49\leq r\leq203$ (see Appendix~\ref{49<=r<=203} for the program code). Suppose that $\mathcal{P}(r')$ is true for all $49\leq r'\leq r$ for some $r\geq203$. We now check the validity of $\mathcal{P}(r +1)$.

Let $\overline{b}$ be an integer such that $0\leq\overline{b}\leq\binom{r+3}{2}$. If $\overline{b}\leq\binom{r+2}{2}$, we can apply $\mathcal{P}(r)$ to $\overline{b}$ to obtain nonnegative integers $a_3,a_4,\dotsc,a_{r}$ and $a_{r+1}=0$ such that

(\textit{i}) $\overset{r+1}{\underset{s=3}{\sum}}a_s\binom{s}{3}=\overline{b}$, and

(\textit{ii}) $\overset{r+1}{\underset{s=3}{\sum}}a_ss\leq r\leq r+1$,

\noindent so it suffices to consider $\binom{r+2}{2}<\overline{b}\leq\binom{r+3}{2}$. Note that there exists a unique integer $s_0$ such that $\binom{s_0}{3}\leq\overline{b}<\binom{s_0+1}{3}$.

Set $\overline{b}':=\overline{b}-\binom{s_0}{3}$. If $\overline{b}'=0$, we are done. If $\overline{b}'\geq1$, we have $1\leq\overline{b}'<\binom{s_0+1}{3}-\binom{s_0}{3}=\binom{s_0}{2}$. Clearly, $s_0\leq r+2$, so we can apply the induction hypothesis $\mathcal{P}(s_0-2)$ to $\overline{b}'$ as long as $s_0\geq51$. This allows us to obtain nonnegative integers $a_3,a_4,\dotsc,a_{s_0-2}$ such that

(\textit{i}) $\overset{s_0-2}{\underset{s=3}{\sum}}a_s\binom{s}{3}=\overline{b}'$, and

(\textit{ii}) $\overset{s_0-2}{\underset{s=3}{\sum}}a_ss\leq s_0-2$.

\noindent By setting $a_{s_0-1}=0$, $a_{s_0}=1$, and $a_{s_0+1}=\dotsb=a_{r +1}=0$, we have

(\textit{i}) $\overset{r+1}{\underset{s=3}{\sum}}a_s\binom{s}{3}=\overline{b}$, and

(\textit{ii}) $\overset{r+1}{\underset{s=3}{\sum}}a_ss\leq2s_0-2$.

It suffices to show that $s_0\geq51$ and $2s_0-2\leq r+1$. First, observe that when $r\geq203$, we have
$$\tbinom{s_0+1}{3}>\overline{b}>\tbinom{r+2}{2}\geq\tbinom{205}{2}=20910,$$
or equivalently, $(s_0+1)s_0(s_0-1)>125460$. A straightforward calculation shows that this inequality holds if and only if $s_0\geq51$.

Next, for any integer $x\geq1$,
\begin{align*}
\tbinom{\frac{x+3}{2}}{3}-\tbinom{x+3}{2}& =\tfrac{(x+3)(x+1)(x-1)}{48}-\tfrac{(x+3)(x+2)}{2}\\
&=\tfrac{1}{48}(x+3)(x^2-24x-49)\geq0
\end{align*}
if and only if $x^2-24x-49\geq0$. By solving the quadratic inequality, it is easy to see that it holds for all integers $x\geq26$. Since $r\geq203$, we have
\begin{center}
$\binom{s_0}{3}\leq\overline{b}\leq\binom{r+3}{2}\leq\binom{\frac{r+3}{2}}{3}.$
\end{center}
This implies $\frac{r+3}{2}\geq s_0$, or equivalently, $2s_0-2\leq r+1$, since $\binom{x}{3}$ is a strictly increasing function for $x\geq3$.
\end{proof}

To complete the case for $k=3$, we still need to consider $3\leq r\leq48$. We have to slightly modify the construction of $M$ based on the one introduced before Proposition~\ref{AsymEst}.

Let $M$ be partitioned into $\ell+1$ submatrices $M_0,M_1,\dotsc,M_\ell$, where $M_i$ denotes the submatrix of $M$ with all rows in $\mathcal{I}_i$. Recall from the proof of Proposition~\ref{regularexist} that all the rows in $M_i$, $1\leq i\leq\ell$, form a plane $H_i$ of dimension $2$. Let the normal vector of $H_i$ be $(c_{i1},c_{i2},c_{i3})$.

Here are three types of modifications on $M_i$.
\begin{enumerate}
\item\label{type1} For some $1\leq i\leq\ell$, take $c_{i3}=0$, but $c_{i1}$ and $c_{i2}$ are nonzero. Let the row vectors in $M_i$ be $(m_{i1},m_{i2},m_{\alpha3})$ for all $\alpha\in\mathcal{I}_i$ such that $c_{i1}m_{i1}+c_{i2}m_{i2}=0$, $m_{\alpha3}$ are all distinct, and all entries in $M_i$ are nonzero. In this $M_i$, every $2\times2$ submatrix obtained from the first two columns is singular. Hence, the number of singular submatrices in $M_i$ increases from $\binom{n_i}{3}$ to $\binom{n_i}{3}+\binom{n_i}{2}=\binom{n_i+1}{3}$. In other words, we save one row every time we use such a modified $M_i$.
\item\label{type2} For at most three different values of $i$ between $1$ and $\ell$ inclusively, take exactly two of $c_{i1},c_{i2},c_{i3}$ to be zero. For example, we can take $c_{i1}=c_{i2}=0$. Let the row vectors in $M_i$ be $(m_{\alpha1},m_{\alpha2},0)$ for all $\alpha\in\mathcal{I}_i$ such that they are pairwisely linearly independent, and all $m_{\alpha1}$ and $m_{\alpha2}$ are nonzero. In this $M_i$, there are $n_i$ singular $1\times1$ submatrices and $2\binom{n_i}{2}$ singular $2\times2$ submatrices. Hence, the number of singular submatrices in $M_i$ increases from $\binom{n_i}{3}$ to $\binom{n_i}{3}+2\binom{n_i}{2}+n_i=\binom{n_i+2}{3}$. In other words, we save two rows for up to three times if we use such a modified $M_i$.
\item\label{type3} In $M_0$, replace $t$ distinct rows by $t$ identical copies of the same row. Then every $3\times3$ and $2\times2$ submatrix among these identical vectors is singular, and every $3\times3$ submatrix formed by picking two rows from these $t$ identical vectors and one row outside these vectors is also singular. Hence, $\overline{b}$ increases by $\binom{t}{3}+\binom{t}{2}(r-t+3)$.
\end{enumerate}

We present the following example to illustrate all three modifications of $M_i$.

\begin{example}\label{example}
Consider $(r,k,b)=(15,3,699)$. Then $\overline{b}=\binom{15+3}{3}-699=117$. Since there do not exist nonnegative integers $a_3,a_4,\dotsc,a_{15}$ such that $\underset{s=3}{\overset{15}{\sum}}a_s\binom{s}{3}=117$ and $\underset{s=3}{\overset{15}{\sum}}a_ss\leq15$, we cannot produce $M$ solely based on the  constructions introduced before Proposition~\ref{AsymEst}. For instance, the greedy algorithm gives $\binom{9}{3}+\binom{6}{3}+\binom{5}{3}+3\binom{3}{3}=117$, but $9+6+5+3\cdot3=29>15$. Here, we present a construction of $M$ with all three modifications used.

Note that $\binom{4}{3}+\binom{4}{2}(15-4+3)=88$ and $\binom{2}{3}+\binom{2}{2}(15-2+3)=16$, so we are going to use type~$\ref{type3}$ modification twice: once with $t=4$ and once with $t=2$. Hence, we can take the two type~$\ref{type3}$ modifications as
\begin{center}
{\footnotesize\begin{tabular}{|c|c|c|}
\hline
$1$& $2$& $4$\\
$1$& $2$& $4$\\
$1$& $2$& $4$\\
$1$& $2$& $4$\\
\hline
\end{tabular}} and {\footnotesize\begin{tabular}{|c|c|c|}
\hline
$1$& $3$& $9$\\
$1$& $3$& $9$\\
\hline
\end{tabular}}.
\end{center}
Since $117-88-16=13=\binom{3+2}{3}+\binom{1+2}{3}+\binom{1+2}{3}+\binom{2+1}{3}$, we are going to use type~$\ref{type2}$ modification three times and type~$\ref{type1}$ modification once to finish the construction. We can take the three type~$\ref{type2}$ modifications and type~$\ref{type1}$ modification as
\begin{center}
{\footnotesize\begin{tabular}{|c|c|c|}
\hline
$2$& $1$& $0$\\
$3$& $1$& $0$\\
$4$& $1$& $0$\\
\hline
\end{tabular}}, {\footnotesize\begin{tabular}{|c|c|c|}
\hline
$5$& $0$& $1$\\
\hline
\end{tabular}}, {\footnotesize\begin{tabular}{|c|c|c|}
\hline
$0$& $6$& $1$\\
\hline
\end{tabular}}, and {\footnotesize\begin{tabular}{|c|c|c|}
\hline
$2$& $3$& $7$\\
$2$& $3$& $10$\\
\hline
\end{tabular}}
\end{center}
respectively. Finally, note that we have only used $4+2+3+1+1+2=13$ rows, we need to take two more rows in $M_0$, say
\begin{center}
{\footnotesize\begin{tabular}{|c|c|c|}
\hline
$1$& $1$& $1$\\
\hline
\end{tabular}} and {\footnotesize\begin{tabular}{|c|c|c|}
\hline
$8$& $7$& $1$\\
\hline
\end{tabular}}.
\end{center}
Altogether, our matrix $M$ can be given by
$$M^\top={\footnotesize\begin{tabular}{|ccccccccccccccc|}
\hline
$1$& $8$& $1$& $1$& $1$& $1$& $1$& $1$& $2$& $3$& $4$& $5$& $0$& $2$& $2$\\
\hline
$1$& $7$& $2$& $2$& $2$& $2$& $3$& $3$& $1$& $1$& $1$& $0$& $6$& $3$& $3$\\
\hline
$1$& $1$& $4$& $4$& $4$& $4$& $9$& $9$& $0$& $0$& $0$& $1$& $1$& $7$& $10$\\
\hline
\end{tabular}}.$$
\end{example}

The construction process introduced in Example~\ref{example} allows us to construct $(r,3,b)^*$ matrices $M$ for more general $(r,3,b)$.

\begin{proposition}\label{Basek=3}
For all integers $11\leq r\leq48$ and $0\leq\overline{b}\leq\binom{r+2}{2}$, an $\big(r,3,\binom{r+3}{3}-\overline{b}\big)^*$-matrix $M$ exists.
\end{proposition}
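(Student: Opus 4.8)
The plan is to dispose of the range $11\le r\le 48$ by a finite but carefully organized case analysis, leaning on Proposition~\ref{regularexist} together with the three modifications (1)--(3) listed above to manufacture an $(r,3,b)$-matrix with exactly $\overline b=\binom{r+3}{r}-b$ singular square submatrices. The arithmetic backbone is the same as in Proposition~\ref{Indk=3}: we look for nonnegative integers $a_3,a_4,\dotsc,a_r$ with $\sum_s a_s\binom{s}{3}=\overline b$ and $\sum_s a_s s\le r$, which yields a partition $(n_0,n_1,\dotsc,n_\ell)$ and hence (by Proposition~\ref{regularexist}) a regular matrix $M$ that is an $(r,3,b)$-matrix. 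The point of the modifications is that when $r$ is too small for the greedy/inductive argument of Proposition~\ref{Indk=3} to close, they let us represent $\overline b$ more economically: a type-(1) block of size $n_i$ contributes $\binom{n_i+1}{3}$ to $\overline b$ while costing only $n_i$ rows (saving one row), a type-(2) block contributes $\binom{n_i+2}{3}$ for $n_i$ rows (saving two rows, usable at most three times), and a type-(3) duplication of a row $t$ times inside $M_0$ adds $\binom{t}{2}+\binom{t}{2}(r-t+3)$ to $\overline b$ at the cost of $t$ rows. So the real statement to prove is: for every $r\in\{11,\dotsc,48\}$ and every $\overline b\in[0,\binom{r+2}{2}]$, some combination of ordinary regular blocks (contributing $\binom{s}{3}$ per $s$ rows), type-(1) blocks, up to three type-(2) blocks, and one type-(3) duplication, hits $\overline b$ exactly while using at most $r$ rows total.

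The order of attack I would use: first, record the menu of ``atoms'' and their (rows, $\overline b$-increment) costs, namely $(s,\binom{s}{3})$ for $3\le s\le r$, $(n_i,\binom{n_i+1}{3})$ for type~(1) with $n_i\ge 3$, $(n_i,\binom{n_i+2}{3})$ for type~(2) with $n_i\ge 2$ and multiplicity $\le 3$, and $(t,\binom{t}{2}(r-t+4))$ for the single type~(3) duplication. Second, observe that the largest attainable $\overline b$ with $\le r$ rows already exceeds $\binom{r+2}{2}$ once we are allowed type~(2) and type~(3) atoms --- e.g.\ a single type-(2) block of size $n_i=r$ gives $\binom{r+2}{3}$, already far above $\binom{r+2}{2}$ --- so the constraint is granularity, not magnitude. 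Third, handle the top of the range: for $\overline b$ close to $\binom{r+2}{2}$ use one large type-(2) or type-(3) block and then fill the (small) remainder with small ordinary or type-(1) blocks, exactly as in the inductive step of Proposition~\ref{Indk=3} but with the row budget loosened by the savings. Fourth, for the genuinely small residual values of $\overline b$ (those below some threshold like $\binom{K}{3}$ with $K$ a small constant), invoke a direct tabulation --- just as Theorem~\ref{n<=r+2} finishes with an explicit table --- checking by hand or by computer that every $\overline b$ in $[0,\text{threshold}]$ is a nonnegative combination of $\binom{3}{3}=1,\binom{4}{3}=4,\binom{5}{3}=10,\dotsc$ together with the cheap type-(1)/(2) contributions $\binom{4}{3}=4,\binom{5}{3}=10,\dotsc$ (type~(1)) and $\binom{3}{3}=1,\binom{4}{3}=4,\dotsc$ (type~(2)), under the relevant row cap. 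Since $\binom{3}{3}=1$ is available, \emph{every} small $\overline b$ is representable by that many size-$3$ blocks as long as $3\overline b\le r$, so the only awkward band is $\overline b$ with $\binom{r+2}{2}\ge\overline b>r/3$, and this is precisely where type-(2) and type-(3) blocks buy back enough rows.

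The main obstacle, and where the proof has real content rather than bookkeeping, is the intermediate band of $\overline b$ where a single big block overshoots but the naive ``size-$3$ blocks only'' construction runs out of rows: one must show that greedily peeling off the largest feasible $\binom{s}{3}$ (or $\binom{s+1}{3}$ via type~(1), or $\binom{s+2}{3}$ via type~(2)) and recursing leaves a remainder small enough, after at most $O(1)$ steps, that the accumulated row cost stays $\le r$ --- the same telescoping estimate as in Proposition~\ref{AsymEst}/\ref{Indk=3}, but now it must be made to work down to $r=11$ rather than only for $r\ge 49$, which is exactly why the row-saving modifications are needed and why the boundary cases $r\in\{11,\dotsc,\sim 20\}$ will likely require the explicit table. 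I would therefore split the proof into: (a) a lemma that for $r$ in an upper subrange, say $r\ge 30$ or so, the greedy argument plus at most one type-(2)/type-(3) block closes everything; and (b) an explicit finite verification (Mathematica-assisted, cross-referenced to an appendix in the style of Appendix~\ref{49<=r<=203}) for the remaining small $r$ and for the residual small-$\overline b$ tails, with the $(6,3,11)$ exclusion never arising here since $r\ge 11$.
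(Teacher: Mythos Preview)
Your plan has the right ingredients---greedy reduction, a small explicit table, and a computer check of the row budget---but your tactical emphasis is inverted relative to the paper's proof. The paper does \emph{not} run the greedy on $\binom{s}{3}$-type contributions (regular, type-(1), or type-(2)) with type-(3) as a one-shot correction. It does the opposite: type-(3) is the primary, \emph{iterated} workhorse. Starting from $\overline b_0=\overline b$, as long as $\overline b_i\ge r+1$ one greedily peels off the largest type-(3) contribution $\binom{t_i}{3}+\binom{t_i}{2}(r-t_i+3)$ not exceeding $\overline b_i$ and recurses. Since the smallest type-(3) contribution (at $t=2$) is already $r+1$, this terminates with a residual $\overline b_j\le r\le 48$; that residual is then matched exactly by a fixed table expressing each integer in $\{1,\dotsc,48\}$ as a sum of at most three type-(2) terms $\binom{s+2}{3}$ and at most two type-(1) terms $\binom{s+1}{3}$---no ordinary regular blocks at all. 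A short Mathematica loop then confirms the accumulated row count stays $\le r$ for every $(r,\overline b)$ in range, uniformly, with no split into subranges of $r$.

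Your restriction to a \emph{single} type-(3) duplication is the substantive divergence: the paper uses several (one per greedy step), and this is what makes the scheme row-efficient, since type-(3) contributions scale linearly in $r$ even for tiny $t$, whereas $\binom{s}{3}$-type blocks do not---precisely the reason Proposition~\ref{Indk=3}'s greedy fails below $r=49$. Two minor corrections: type-(1) blocks are allowed with $n_i\ge 2$ (not $\ge 3$) and type-(2) with $n_i\ge 1$ (not $\ge 2$); the paper's table leans heavily on these smallest sizes. Your outline could likely be forced through since the range is finite, but it would demand a longer and less uniform case analysis than the paper's.
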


\begin{proof}
Let $r$ and $\overline{b}$ be integers such that $11\leq r\leq48$ and $0\leq\overline{b}\leq\binom{r+2}{2}$. Let $\overline{b}_0=\overline{b}$. For each integer $i\geq0$, if $\overline{b}_i\geq\binom{2}{3}+\binom{2}{2}(r-2+3)=r+1$, then let $t_i$ be the largest integer such that $\binom{t_i}{3}+\binom{t_i}{2}(r-t_i+3)\leq\overline{b}_i$, and let $\overline{b}_{i+1}=\overline{b}_i-\big(\binom{t_i}{3}+\binom{t_i}{2}(r-t_i+3)\big)$. We repeat this process until $\overline{b}_j<\binom{2}{3}+\binom{2}{2}(r-2+3)=r+1$ for some $j\geq0$. In this process, we are building a $(t_0+t_1+\dotsb+t_{j-1})\times3$ matrix using only rows from a type~$\ref{type3}$ modification.

After using only rows from a type~$\ref{type3}$ modification, we still need to pick appropriate rows to give an additional $\overline{b}_j$ singular submatrices if $\overline{b}_j>0$. Note that $\overline{b}_j\leq r\leq48$. The following is a list of partitions of integers from $1$ to $48$. Each summand $\binom{s+2}{3}$ corresponds to $s$ rows from a type~$\ref{type2}$ modification, and each summand $\binom{s+1}{3}$ corresponds to $s$ rows from a type~$\ref{type1}$ modification. The bold digit in each of the following binomial coefficients gives the value of $s$.\\

\noindent\begin{tabular}{p{215pt}p{200pt}}
\hspace{-5pt}$1=\binom{\textbf{1}+2}{3}$& $25=\binom{\textbf{4}+2}{3}+\binom{\textbf{2}+2}{3}+\binom{\textbf{1}+2}{3}$\\
\end{tabular}\\
\noindent\begin{tabular}{p{215pt}p{200pt}}
\hspace{-5pt}$2=\binom{\textbf{1}+2}{3}+\binom{\textbf{1}+2}{3}$& $26=\binom{\textbf{4}+2}{3}+\binom{\textbf{2}+2}{3}+\binom{\textbf{1}+2}{3}+\binom{\textbf{2}+1}{3}$\\
\end{tabular}\\
\noindent\begin{tabular}{p{215pt}p{200pt}}
\hspace{-5pt}$3=\binom{\textbf{1}+2}{3}+\binom{\textbf{1}+2}{3}+\binom{\textbf{1}+2}{3}$& $27=\binom{\textbf{4}+2}{3}+\binom{\textbf{2}+2}{3}+\binom{\textbf{1}+2}{3}+\binom{\textbf{2}+1}{3}+\binom{\textbf{2}+1}{3}$\\
\end{tabular}\\
\noindent\begin{tabular}{p{215pt}p{200pt}}
\hspace{-5pt}$4=\binom{\textbf{2}+2}{3}$& $28=\binom{\textbf{4}+2}{3}+\binom{\textbf{2}+2}{3}+\binom{\textbf{2}+2}{3}$\\
\end{tabular}\\
\noindent\begin{tabular}{p{215pt}p{200pt}}
\hspace{-5pt}$5=\binom{\textbf{2}+2}{3}+\binom{\textbf{1}+2}{3}$& $29=\binom{\textbf{4}+2}{3}+\binom{\textbf{2}+2}{3}+\binom{\textbf{2}+2}{3}+\binom{\textbf{2}+1}{3}$\\
\end{tabular}\\
\noindent\begin{tabular}{p{215pt}p{200pt}}
\hspace{-5pt}$6=\binom{\textbf{2}+2}{3}+\binom{\textbf{1}+2}{3}+\binom{\textbf{1}+2}{3}$& $30=\binom{\textbf{4}+2}{3}+\binom{\textbf{3}+2}{3}$\\
\end{tabular}\\
\noindent\begin{tabular}{p{215pt}p{200pt}}
\hspace{-5pt}$7=\binom{\textbf{2}+2}{3}+\binom{\textbf{1}+2}{3}+\binom{\textbf{1}+2}{3}+\binom{\textbf{2}+1}{3}$& $31=\binom{\textbf{4}+2}{3}+\binom{\textbf{3}+2}{3}+\binom{\textbf{1}+2}{3}$\\
\end{tabular}\\
\noindent\begin{tabular}{p{215pt}p{200pt}}
\hspace{-5pt}$8=\binom{\textbf{2}+2}{3}+\binom{\textbf{2}+2}{3}$& $32=\binom{\textbf{4}+2}{3}+\binom{\textbf{3}+2}{3}+\binom{\textbf{1}+2}{3}+\binom{\textbf{2}+1}{3}$\\
\end{tabular}\\
\noindent\begin{tabular}{p{215pt}p{200pt}}
\hspace{-5pt}$9=\binom{\textbf{2}+2}{3}+\binom{\textbf{2}+2}{3}+\binom{\textbf{1}+2}{3}$& $33=\binom{\textbf{4}+2}{3}+\binom{\textbf{3}+2}{3}+\binom{\textbf{1}+2}{3}+\binom{\textbf{2}+1}{3}+\binom{\textbf{2}+1}{3}$\\
\end{tabular}\\
\noindent\begin{tabular}{p{215pt}p{200pt}}
\hspace{-5pt}$10=\binom{\textbf{3}+2}{3}$& $34=\binom{\textbf{4}+2}{3}+\binom{\textbf{3}+2}{3}+\binom{\textbf{2}+2}{3}$\\
\end{tabular}\\
\noindent\begin{tabular}{p{215pt}p{200pt}}
\hspace{-5pt}$11=\binom{\textbf{3}+2}{3}+\binom{\textbf{1}+2}{3}$& $35=\binom{\textbf{5}+2}{3}$\\
\end{tabular}\\
\noindent\begin{tabular}{p{215pt}p{200pt}}
\hspace{-5pt}$12=\binom{\textbf{3}+2}{3}+\binom{\textbf{1}+2}{3}+\binom{\textbf{1}+2}{3}$& $36=\binom{\textbf{5}+2}{3}+\binom{\textbf{1}+2}{3}$\\
\end{tabular}\\
\noindent\begin{tabular}{p{215pt}p{200pt}}
\hspace{-5pt}$13=\binom{\textbf{3}+2}{3}+\binom{\textbf{1}+2}{3}+\binom{\textbf{1}+2}{3}+\binom{\textbf{2}+1}{3}$& $37=\binom{\textbf{5}+2}{3}+\binom{\textbf{1}+2}{3}+\binom{\textbf{1}+2}{3}$\\
\end{tabular}\\
\noindent\begin{tabular}{p{215pt}p{200pt}}
\hspace{-5pt}$14=\binom{\textbf{3}+2}{3}+\binom{\textbf{2}+2}{3}$& $38=\binom{\textbf{5}+2}{3}+\binom{\textbf{1}+2}{3}+\binom{\textbf{1}+2}{3}+\binom{\textbf{2}+1}{3}$\\
\end{tabular}\\
\noindent\begin{tabular}{p{215pt}p{200pt}}
\hspace{-5pt}$15=\binom{\textbf{3}+2}{3}+\binom{\textbf{2}+2}{3}+\binom{\textbf{1}+2}{3}$& $39=\binom{\textbf{5}+2}{3}+\binom{\textbf{2}+2}{3}$\\
\end{tabular}\\
\noindent\begin{tabular}{p{215pt}p{200pt}}
\hspace{-5pt}$16=\binom{\textbf{3}+2}{3}+\binom{\textbf{2}+2}{3}+\binom{\textbf{1}+2}{3}+\binom{\textbf{2}+1}{3}$& $40=\binom{\textbf{5}+2}{3}+\binom{\textbf{2}+2}{3}+\binom{\textbf{1}+2}{3}$\\
\end{tabular}\\
\noindent\begin{tabular}{p{215pt}p{200pt}}
\hspace{-5pt}$17=\binom{\textbf{3}+2}{3}+\binom{\textbf{2}+2}{3}+\binom{\textbf{1}+2}{3}+\binom{\textbf{2}+1}{3}+\binom{\textbf{2}+1}{3}$& $41=\binom{\textbf{5}+2}{3}+\binom{\textbf{2}+2}{3}+\binom{\textbf{1}+2}{3}+\binom{\textbf{2}+1}{3}$\\
\end{tabular}\\
\noindent\begin{tabular}{p{215pt}p{200pt}}
\hspace{-5pt}$18=\binom{\textbf{3}+2}{3}+\binom{\textbf{2}+2}{3}+\binom{\textbf{2}+2}{3}$& $42=\binom{\textbf{5}+2}{3}+\binom{\textbf{2}+2}{3}+\binom{\textbf{1}+2}{3}+\binom{\textbf{2}+1}{3}+\binom{\textbf{2}+1}{3}$\\
\end{tabular}\\
\noindent\begin{tabular}{p{215pt}p{200pt}}
\hspace{-5pt}$19=\binom{\textbf{3}+2}{3}+\binom{\textbf{2}+2}{3}+\binom{\textbf{2}+2}{3}+\binom{\textbf{2}+1}{3}$& $43=\binom{\textbf{5}+2}{3}+\binom{\textbf{2}+2}{3}+\binom{\textbf{2}+2}{3}$\\
\end{tabular}\\
\noindent\begin{tabular}{p{215pt}p{200pt}}
\hspace{-5pt}$20=\binom{\textbf{4}+2}{3}$& $44=\binom{\textbf{5}+2}{3}+\binom{\textbf{2}+2}{3}+\binom{\textbf{2}+2}{3}+\binom{\textbf{2}+1}{3}$\\
\end{tabular}\\
\noindent\begin{tabular}{p{215pt}p{200pt}}
\hspace{-5pt}$21=\binom{\textbf{4}+2}{3}+\binom{\textbf{1}+2}{3}$& $45=\binom{\textbf{5}+2}{3}+\binom{\textbf{3}+2}{3}$\\
\end{tabular}\\
\noindent\begin{tabular}{p{215pt}p{200pt}}
\hspace{-5pt}$22=\binom{\textbf{4}+2}{3}+\binom{\textbf{1}+2}{3}+\binom{\textbf{1}+2}{3}$& $46=\binom{\textbf{5}+2}{3}+\binom{\textbf{3}+2}{3}+\binom{\textbf{1}+2}{3}$\\
\end{tabular}\\
\noindent\begin{tabular}{p{215pt}p{200pt}}
\hspace{-5pt}$23=\binom{\textbf{4}+2}{3}+\binom{\textbf{1}+2}{3}+\binom{\textbf{1}+2}{3}+\binom{\textbf{2}+1}{3}$& $47=\binom{\textbf{5}+2}{3}+\binom{\textbf{3}+2}{3}+\binom{\textbf{1}+2}{3}+\binom{\textbf{2}+1}{3}$\\
\end{tabular}\\
\noindent\begin{tabular}{p{215pt}p{200pt}}
\hspace{-5pt}$24=\binom{\textbf{4}+2}{3}+\binom{\textbf{2}+2}{3}$& $48=\binom{\textbf{5}+2}{3}+\binom{\textbf{3}+2}{3}+\binom{\textbf{1}+2}{3}+\binom{\textbf{2}+1}{3}+\binom{\textbf{2}+1}{3}$
\end{tabular}\\

From this table, we can deduce the number of rows from type~$\ref{type1}$ and type~$\ref{type2}$ modifications by summing up the bold digits. For example, if $\overline{b}_j=23$, then the number of additional rows is $4+1+1+2=8$. To obtain $\overline{b}$ singular square submatrices in $M$, we combine type~$\ref{type1}$ and type~$\ref{type2}$ modifications with the $t_0+t_1+\dotsb+t_{j-1}$ rows from a type~$\ref{type3}$ modification. It remains to verify that the total number of rows we have used is at most $r$. Once again, we employ Mathematica to finish the verification, and the program code is provided in Appendix~\ref{11<=r<=48} for reference.
\end{proof}

\begin{theorem}\label{thmb<=r+3Cr}
Conjecture~$\ref{conj}$ holds under the additional condition that $b\leq\binom{r+3}{3}$, except when $(n,r,b)=(6,3,11)$.
\end{theorem}

\begin{proof}
When $3\leq r\leq10$, we verify through explicit constructions that $(r,3,b)^*$-matrices exist for all integers $1\leq b\leq\binom{r+3}{3}$ except when $(r,3,b)=(3,3,11)$ (see Appendix~\ref{3<=r<=10}). By Lemma~\ref{rkbigger}, for all integers $r\geq11$, $(r,3,b)^*$-matrices exist for all integers $1\leq b\leq\binom{10+3}{3}$.

When $r\geq11$, Propositions~$\ref{Indk=3}$ and $\ref{Basek=3}$ imply that $(r,3,b)^*$-matrices exist for all integers $\binom{r+2}{3}=\binom{r+3}{3}-\binom{r+2}{2}\leq b\leq\binom{r+3}{3}$. Note that $\big[1,\binom{10+3}{3}]\cup\overset{r}{\underset{i=11}{\bigcup}}\big[\binom{i+2}{3},\binom{i+3}{3}\big]=\big[1,\binom{r+3}{3}\big]$. By Lemma~\ref{rkbigger}, for all integers $r\geq11$ and $1\leq b\leq\binom{r+3}{3}$, $(r,3,b)^*$-matrices exist.

Therefore, $(n,r,b)$-matrices exist for all integers $1\leq b\leq\min\left\{\binom{r+3}{3},\binom{n}{r}\right\}$, except when $(n,r,b)=(6,3,11)$.
\end{proof}

Now, we have all the tools for proving Theorem~$\ref{rlarge}$.

\begin{proof}[Proof of Theorem~$\ref{rlarge}$.]
For each $i=0,1,2,\dotsc,k-3$, let $r_0(k-i)\in\N$ be the constant obtained by applying Proposition~$\ref{AsymEst}$ to $k-i$. Without loss of generality, assume that $r_0(3)\leq r_0(4)\leq\dotsb\leq r_0(k)$. Let $R_0=r_0(k)$, and let integers $R_1,R_2,\dotsc,R_{k-3}$ be such that
\begin{enumerate}
\item $R_0\leq R_1\leq R_2\leq\dotsb\leq R_{k-3}$, and
\item $\binom{R_{i}+k-i-1}{k-i}\leq\binom{R_{i+1}+k-(i+1)}{k-(i+1)}$ for all $i=0,1,2,\dotsc,k-4$.
\end{enumerate}

Let $R=R_{k-3}$, and fix $r\geq R$. By Theorem~$\ref{blarge}$, an $(r,k,b)^*$-matrix exists for all integers $\binom{R_0+k-1}{k}\leq b\leq\binom{r+k}{r}$. By Theorem~$\ref{blarge}$ again, for each $i=1,2,\dotsc,k-3$, since $R_i\geq r_0(k-i)$, an $(R_i,k-i,b)^*$-matrix exists for all integers $\binom{R_i+k-i-1}{k-i}\leq b\leq\binom{R_i+k-i}{k-i}$. By Lemma~$\ref{rkbigger}$, for each $i=1,2,\dotsc,k-3$, an $(r,k,b)^*$-matrix exists for all integers $\binom{R_i+k-i-1}{k-i}\leq b\leq\binom{R_i+k-i}{k-i}$. Our definition of $R_i$ implies that $\big[\binom{R_0+k-1}{k},\binom{r+k}{r}\big]\cup\overset{k-3}{\underset{i=1}{\bigcup}}\big[\binom{R_i+k-i-1}{k-i},\binom{R_i+k-i}{k-i}\big]$ covers all integers $\binom{R+2}{3}\leq b\leq\binom{r+k}{r}$. Therefore, an $(r,k,b)^*$-matrix exists for all integers $\binom{R+2}{3}\leq b\leq\binom{r+k}{k}$.

Finally, we are done by Theorem~$\ref{thmb<=r+3Cr}$, which says an $(r,k,b)^*$-matrix exists for all integers $1\leq b\leq\binom{R+3}{3}$.
\end{proof}

\section{Conclusion and remarks}
Throughout this paper, the base field is $\C$, but the same argument works for any algebraically closed field. Moreover, if we consider the algebraic closure $\overline{\F_p}$ for some prime $p$, the constructed $(r,k,b)^*$-matrix naturally descends to some finite extension of $\F_p$. However, we cannot ensure that there is a fixed finite extension which captures all of them. In any case, the matroid structure arising from these matrices will not be affected.

All our current results focus on the situation when $n-r$ is small comparing with $r$. Recall from Proposition~$\ref{k<=r}$ that we only need to consider $r\leq n\leq2r$. Hence, our next goal is to investigate the case when $n$ is close to $2r$. In view of the non-existence of $(6,3,11)$-matroids, this latter goal should be much harder. Nevertheless, we believe that the general direction towards a complete solution to Conjecture~\ref{conj} will be another asymptotic result concerning the existence of $(r,k,b)^*$-matrices for $k$ closer to $r$, which should isolate a finite number of cases for direct checking.

\appendix
\section{Appendix: Mathematica code for verifying base cases in Proposition~$\ref{Indk=3}$}\label{49<=r<=203}

We use Mathematica to check the validity of Proposition~\ref{Indk=3} for $49\leq r\leq 203$.\\
\\
\texttt{truthvalue = True;\\
Do[a = Table[0, \{s, r\}];\\
\indent Do[btemp = bbar;\\
\indent\indent Do[a[[s]] = Floor[btemp/Binomial[s, 3]];\\
\indent\indent\indent btemp = Mod[btemp, Binomial[s, 3]], \{s, r, 3, -1\}];\\
\indent\indent If[Sum[a[[s]]*s, \{s, 3, r\}] > r, truthvalue = False],\\
\indent\indent \{bbar, 0, Binomial[r + 2, 2]\}],\\
\indent \{r, 49, 203\}];\\
truthvalue}\\

As the final output is true, we finish our verification.

\section{Appendix: Mathematica code for verifying base cases in Proposition~$\ref{Basek=3}$}\label{11<=r<=48}

From the list in Proposition~$\ref{Basek=3}$, we use \texttt{length} below to record the number of rows from type~$\ref{type1}$ and type~$\ref{type2}$ modifications to obtain additional singular submatrices.\\
\\
\texttt{truthvalue = True;\\
length = \{0, 1, 2, 3, 2, 3, 4, 6, 4, 5, 3, 4, 5, 7, 5, 6, 8, 10, 7, 9,\\
\indent 4, 5, 6, 8, 6, 7, 9, 11, 8, 10, 7, 8, 10, 12, 9, 5, 6, 7, 9, 7, 8,\\
\indent 10, 12, 9, 11, 8, 9, 11, 13\};\\
Do[tbinomial = Table[Binomial[t, 3] + Binomial[t, 2] (r - t + 3), \{t, r\}];\\
\indent a = Table[0, \{t, r\}];\\
\indent Do[btemp = bbar;\\
\indent\indent Do[a[[t]] = Floor[btemp/tbinomial[[t]]];\\
\indent\indent\indent btemp = Mod[btemp, tbinomial[[t]]], \{t, r, 2, -1\}];\\
\indent\indent If[Sum[a[[t]]*t, \{t, 2, r\}] + length[[btemp + 1]] > r,\\
\indent\indent\indent truthvalue = False],\\
\indent\indent \{bbar, 0, Binomial[r + 2, 2]\}],\\
\indent \{r, 11, 48\}];\\
truthvalue}\\

As the final output is true, we finish our verification.

\section{Appendix: Constructions of $(r,3,b)^*$-matrices for $3\leq r\leq10$ in Theorem~$\ref{thmb<=r+3Cr}$}\label{3<=r<=10}

When $r=3$, Corollary~$\ref{corb<=r+2Cr}$ implies that $(3,3,b)^*$-matrices exist for all integers $1\leq b\leq\binom{3+2}{2}=10$, and the following constructions produce $(3,3,b)^*$-matrices $M$ for all integers $12\leq b\leq\binom{3+3}{3}=20$.

\begin{center}
\footnotesize\begin{tabular}{|c|c|c|c|c|}
\hline
$b=$& $12$& $13$& $14$& $15$\\
\hline
& & & &\\
$M=$& \begin{tabular}{|c|c|c|}
\hline
$0$& $0$& $1$\\
$1$& $1$& $1$\\
$1$& $1$& $1$\\
\hline
\end{tabular}& \begin{tabular}{|c|c|c|}
\hline
$0$& $0$& $1$\\
$1$& $0$& $1$\\
$1$& $1$& $0$\\
\hline
\end{tabular}& \begin{tabular}{|c|c|c|}
\hline
$0$& $0$& $1$\\
$1$& $0$& $2$\\
$1$& $1$& $1$\\
\hline
\end{tabular}& \begin{tabular}{|c|c|c|}
\hline
$0$& $1$& $1$\\
$0$& $1$& $2$\\
$1$& $1$& $2$\\
\hline
\end{tabular}\\
& & & &\\
\hline
\end{tabular}
\end{center}
\begin{center}
\footnotesize\begin{tabular}{|c|c|c|c|c|c|}
\hline
$b=$& $16$& $17$& $18$& $19$& $20$\\
\hline
& & & & &\\
$M=$& \begin{tabular}{|c|c|c|}
\hline
$0$& $1$& $1$\\
$0$& $1$& $2$\\
$1$& $1$& $3$\\
\hline
\end{tabular}& \begin{tabular}{|c|c|c|}
\hline
$0$& $1$& $1$\\
$1$& $0$& $1$\\
$1$& $1$& $0$\\
\hline
\end{tabular}& \begin{tabular}{|c|c|c|}
\hline
$0$& $1$& $1$\\
$1$& $0$& $1$\\
$1$& $1$& $3$\\
\hline
\end{tabular}& \begin{tabular}{|c|c|c|}
\hline
$0$& $1$& $1$\\
$1$& $1$& $2$\\
$1$& $2$& $5$\\
\hline
\end{tabular}& \begin{tabular}{|c|c|c|}
\hline
$1$& $1$& $1$\\
$1$& $2$& $3$\\
$1$& $3$& $6$\\
\hline
\end{tabular}\\
& & & & &\\
\hline
\end{tabular}
\end{center}

Here is the Mathematica code for counting the number of invertible submatrices of $M$. The technique is to count the number of invertible $r\times r$ submatrices of $A^\top$, which is obtained by stacking the identity matrix $I_r$ on top of $M^\top$.\\
\\
\texttt{b[matrix\_] := Block[\{r, k, choice, listofdet\},\\
\indent \{r, k\} = Dimensions[matrix];\\
\indent choice = Subsets[Table[i, \{i, r + k\}], \{r\}];\\
\indent listofdet = Table[\\
\indent\indent Det[ Join[ IdentityMatrix[r], Transpose[matrix] ] [[ choice[[i]] ]] ],\\
\indent\indent \{i, Binomial[r + k, k]\}];\\
\indent Count[listofdet, u\_ /; u != 0]]}\\

To verify the above constructions of $M$, one may use the following Mathematica code.\\
\\
\texttt{Map[b, \{\{\{0, 0, 1\}, \{1, 1, 1\}, \{1, 1, 1\}\}, \{\{0, 0, 1\}, \{1, 0, 1\}, \{1, 1, 0\}\},\\
\indent \{\{0, 0, 1\}, \{1, 0, 2\}, \{1, 1, 1\}\}, \{\{0, 1, 1\}, \{0, 1, 2\}, \{1, 1, 2\}\},\\
\indent \{\{0, 1, 1\}, \{0, 1, 2\}, \{1, 1, 3\}\}, \{\{0, 1, 1\}, \{1, 0, 1\}, \{1, 1, 0\}\},\\
\indent \{\{0, 1, 1\}, \{1, 0, 1\}, \{1, 1, 3\}\}, \{\{0, 1, 1\}, \{1, 1, 2\}, \{1, 2, 5\}\},\\
\indent \{\{1, 1, 1\}, \{1, 2, 3\}, \{1, 3, 6\}\}\}]}\\

Corollary~\ref{corb<=r+2Cr} also implies that a $(4,3,11)^*$-matrix exists since $11\leq\binom{4+2}{2}=15$. The following constructions produce $(r,3,b)^*$-matrices $M$ for all integers $4\leq r\leq10$ and $\binom{r+2}{3}<b\leq\binom{r+3}{3}$. Note that $[1,10]\cup[12,20]\cup\{11\}\cup\overset{r}{\underset{i=4}{\bigcup}}\big[\binom{i+2}{3},\binom{i+3}{3}\big]=\big[1,\binom{r+3}{3}\big]$. By Proposition~\ref{rkbigger}, for all integers $3\leq r\leq10$ and $1\leq b\leq\binom{r+3}{3}$, $(r,3,b)^*$-matrices exist except $(r,3,b)=(3,3,11)$. For the Mathematica code, one may refer to the ancillary files, or download it from \url{http://faculty.kutztown.edu/wong/MathematicaCode(Theorem4.4).txt}.

When $r=4$ and $\binom{4+2}{3}=20<b\leq\binom{4+3}{3}=35$, the following constructions produce $(4,3,b)^*$-matrices.

\begin{center}
\footnotesize
\\
& & & & & & &\\
\hline
\end{tabular}
\end{center}

\end{document}